\numberwithin{theorem}{section}
\newcommand{\TheTitle}{Rigorous Analysis for Efficient Statistically Accurate Algorithms for Solving Fokker-Planck Equations in Large Dimensions}
\newcommand{\TheAuthors}{Nan Chen, Andrew J. Majda, and Xin. T. Tong}
\headers{\TheTitle}{\TheAuthors}
\title{{\TheTitle}\thanks{Submitted to the editors DATE.
\funding{The research of A.J.M. is partially supported by the Office of Naval Research Grant ONR MURI N00014-16-1-2161 and the Center for Prototype Climate Modeling (CPCM) at New York University Abu Dhabi Research Institute. N.C. is supported as a postdoctoral fellow through A.J.M's ONR MURI Grant. X.T.T is supported by NUS grant R-146-000-226-133.}}}
\author{
  Nan Chen\thanks{Department of Mathematics and Center for Atmosphere
Ocean Science, Courant Institute of Mathematical Sciences, New York University, New York, NY, USA
    (\email{chennan@cims.nyu.edu}).}
  \and
  Andrew J. Majda\thanks{Department of Mathematics and Center for Atmosphere
Ocean Science, Courant Institute of Mathematical Sciences, New York University, New York, NY, USA and Center for Prototype
Climate Modeling, New York University Abu Dhabi, Saadiyat Island, Abu Dhabi, UAE. (\email{jonjon@cims.nyu.edu}).}
  \and
  Xin T. Tong\thanks{Department of Mathematics, National University of Singapore, Singapore
 (\email{mattxin@nus.edu.sg}).}
}
\newcommand{\calO}{\mathcal{O}}
\newcommand{\E}{\mathbb{E}}
\newcommand{\calC}{\mathcal{C}}
\newcommand{\calE}{\mathcal{E}}
\newcommand{\bfu}{\mathbf{u}}
\newcommand{\bfB}{\mathbf{B}}
\newcommand{\bfF}{\mathbf{F}}
\newcommand{\reals}{\mathbb{R}}
\newcommand{\unit}{\mathds{1}}
\newcommand{\bfP}{\mathbf{P}}
\newcommand{\calF}{\mathcal{F}}
\newcommand{\calD}{\mathcal{D}}
\newcommand{\one}{\mathbf{I}}
\newcommand{\two}{\mathbf{II}}
\newcommand{\uone}{\mathbf{u}_\one}
\newcommand{\utwo}{\mathbf{u}_\two}
\newcommand{\barut}{\bar{\mathbf{u}}_\two}
\newcommand{\bfA}{\mathbf{A}}
\newcommand{\bfC}{\mathbf{C}}
\newcommand{\bfa}{\mathbf{a}}
\newcommand{\bfv}{\mathbf{v}}
\newcommand{\bfR}{\mathbf{R}}
\newcommand{\bfW}{\mathbf{W}}
\newcommand{\bfSigma}{\mathbf{\Sigma}}
\newtheorem{thm}{Theorem}[section]
\newtheorem{cor}[thm]{Corollary}
\newtheorem{lem}[thm]{Lemma}
\newtheorem{prop}[thm]{Proposition}
\newtheorem{aspt}[thm]{Assumption}
\newtheorem{rem}[thm]{Remark}
\begin{document}

\maketitle

\begin{abstract}
This article presents a rigorous analysis for efficient statistically accurate algorithms for solving the Fokker-Planck equations associated with high-dimensional nonlinear turbulent dynamical systems with conditional Gaussian structures. Despite the conditional Gaussianity, these nonlinear systems contain many strong non-Gaussian features such as intermittency and fat-tailed probability density functions (PDFs). The algorithms involve a hybrid strategy that requires only a small number of samples $L$ to capture both the transient and the equilibrium  non-Gaussian PDFs with high accuracy.
Here, a conditional Gaussian mixture in a high-dimensional subspace via an extremely efficient parametric method is combined with a judicious Gaussian kernel density estimation in the remaining low-dimensional subspace.
Rigorous analysis shows that the mean integrated squared error in the recovered PDFs in the  high-dimensional subspace is bounded by the inverse square root of the determinant of the conditional covariance, where the conditional covariance is completely determined by the underlying dynamics and is independent of $L$. This is fundamentally different from a direct application of  kernel methods to solve the full PDF, where $L$ needs to increase exponentially with the dimension of the system and the bandwidth shrinks.
A detailed comparison between different methods justifies that the efficient statistically accurate algorithms are able to overcome the curse of dimensionality. It is also shown with mathematical rigour that these algorithms are robust in long time provided that the system is controllable and stochastically stable. Particularly, dynamical systems with energy-conserving quadratic nonlinearity as in many geophysical and engineering turbulence are proved to have these properties.

\end{abstract}

\begin{keywords}
  Fokker-Planck equation, high-dimensional non-Gaussian PDFs, hybrid strategy, small sample size, long time persistence
\end{keywords}

\begin{AMS}
  35Q84, 76F55, 65C05, 37C75, 93B05
\end{AMS}

\section{Introduction}
The Fokker-Planck equation is a partial differential equation (PDE) that governs the time evolution of the probability density function (PDF) of a complex system with noise \cite{gardiner1985stochastic, risken1989fokker}. Many complex dynamical systems in geophysical and engineering turbulence, neuroscience and excitable media have large dimensions and strong nonlinearities, the associated PDFs of which are highly non-Gaussian with intermittency and extreme events \cite{majda2016introduction, lindner2004effects}. Predicting the rare and extreme events \cite{chen2014predicting, cousins2014quantification, ghil2011extreme, palmer2002quantifying, mohamad2016probabilistic, cousins2016reduced, thual2016simple}, quantifying the uncertainty in the presence of intermittent instabilities \cite{majda2012lessons, branicki2012quantifying, greco2009statistical, branicki2014non} and characterizing other non-Gaussian features \cite{neelin2010long, huang2001application} all require solving high-dimensional Fokker-Planck equations with strong non-Gaussian features.

Since there is no general closed-form solution for the Fokker-Planck equation, various numerical and approximate approaches have been developed to solve the evolution of the PDF $p(\mathbf{u},t)$, where $\mathbf{u}$ consists of the state variables and $t$ is the time. However, traditional numerical methods such as finite element and finite difference as well as the direct Monte Carlo simulations of the underlying dynamics all suffer from the curse of dimensionality \cite{robert2004monte, daum2003curse, pichler2013numerical, kumar2006solution, spencer1993numerical}. Furthermore, even in the low-dimensional scenarios, substantial computational cost is already required for an accurate estimation of the fat tails of the highly intermittent non-Gaussian PDFs.
On the other hand, different methods for solving the partial or the approximate solutions of $p(\mathbf{u},t)$ have been proposed for special dynamical systems. For example, asymptotic expansion with truncations provides good approximate PDFs associated with the slow varying variables in non-Gaussian systems with multiscale features \cite{gardiner1985stochastic, majda1999models, majda2001mathematical, majda2006stochastic}.
Splitting methods \cite{er2011methodology, er2012state},  orthogonal functions and  tensor decompositions \cite{von2000calculation, sun2014numerical, risken1989fokker} are able to provide reasonably good estimations of the steady state PDFs. If the systems are weakly nonlinear with additive noise, then equivalent linearization method \cite{socha2007linearization, booton1954nonlinear} is also frequently used for solving approximate solutions.

In recent work by two of the authors \cite{chen2017efficient},  efficient statistically accurate algorithms have been developed for solving the Fokker-Planck equation associated with high-dimensional nonlinear turbulent dynamical systems with conditional Gaussian structures \cite{chen2016filtering}. Decomposing the state variables $\mathbf{u}$ into two groups $\mathbf{u}=(\mathbf{u}_\mathbf{I},\mathbf{u}_\two)$ with $\mathbf{u}_{\mathbf{I}}\in R^{N_{\mathbf{I}}}$ and $\mathbf{u}_{\two}\in R^{N_{\two}}$. The conditional Gaussian systems are characterized by the fact that once a single trajectory of $\mathbf{u}_\mathbf{I}(s\leq t)$ is given, $\mathbf{u}_\two(t)$ conditioned on $\mathbf{u}_\mathbf{I}(s\leq t)$ becomes a Gaussian process. Despite the conditional Gaussian structure, the coupled system of $\mathbf{u}_\mathbf{I}$ and $\mathbf{u}_\two$ is highly nonlinear and  it is able to capture many strong non-Gaussian features such as intermittency and fat-tailed PDFs that are commonly seen in nature \cite{chen2016filtering}. Note that in most turbulent dynamical systems, the observed variables $\mathbf{u}_{\mathbf{I}}$ represent large scale or resolved variables, which usually have only a small dimension, while the dimension of the unresolved or unobserved variables $\mathbf{u}_{\two}$ can be very large \cite{majda2014conceptual, majda2016introduction}.
Applications of the conditional Gaussian framework to highly nonlinear turbulent dynamical systems include  modelling and predicting the highly intermittent and non-Gaussian times series of the Madden-Julian oscillation (MJO) and monsoon \cite{chen2014predicting, chen2015predicting, chen2015predicting2}, filtering the stochastic skeleton model for the MJO \cite{chen2016filtering2}, and state estimation of the turbulent ocean flows from noisy Lagrangian tracers \cite{chen2014information, chen2015noisy, chen2016model}. Other studies that also fit into the conditional Gaussian framework includes the dynamic stochastic superresolution of sparsely observed turbulent systems using cheap exactly solvable forecast models \cite{branicki2013dynamic, keating2012new}, stochastic superparameterization for geophysical turbulent flows \cite{majda2014new}, physics constrained nonlinear regression models \cite{majda2012physics, harlim2014ensemble}, stochastic parameterized extended Kalman filter \cite{gershgorin2010test, gershgorin2010improving, branicki2012quantifying, chen2014mcmc, lee2017derivation} and blended particle filters for high-dimensional chaotic systems \cite{majda2014blended}.

The  efficient statistically accurate algorithms \cite{chen2017efficient} involve a hybrid strategy that requires only a small number of  samples.   In these algorithms, a conditional Gaussian mixture in the high-dimensional subspace of $\mathbf{u}_\two$ via an extremely efficient parametric method is combined with a judicious  Gaussian  kernel density estimation in the low-dimensional subspace of $\mathbf{u}_\mathbf{I}$. In particular, the conditional Gaussian distributions in the high-dimensional subspace are solved via  closed analytical formulae and are therefore computationally efficient and accurate. The full non-Gaussian joint PDF of the system is then given by a Gaussian mixture.
One remarkable feature of these efficient hybrid algorithms is that each conditional Gaussian distribution is able to cover a significant portion of the high-dimensional PDF. This guarantees the sufficiency of using only a small number of samples, which overcomes the curse of dimensionality. It has been shown in a stringent set of numerical tests \cite{chen2017efficient} that with an order of $O(100)$ samples the mixture distribution has a significant skill in capturing both the statistically steady state and the transient behavior with fat tails of the high-dimensional non-Gaussian PDFs  in up to $6$ dimensions while an order of $O(10^6)$ samples is required in the Monte Carlo simulation to reach the same accuracy. In \cite{chen2017efficient}, the restriction to $6$ dimension of the hybrid method is not essential but was utilized to allow comprehensive validation of the statistics in the truth model with an instructive simple model.


This article serves as a rigorous analysis for these efficient statistically accurate algorithms. The main focus here is the accuracy of the recovered PDFs in terms of the sample size $L$ as well as its dependence on different factors, in particular the dimension of the state variables and the time span. Throughout the article, the mean integrated square error (MISE) is used to quantify the accuracy.


Our first result \cref{thm:MISE} reveals that the MISE in the recovered high-dimensional PDFs associated with the unresolved variables $\utwo$ is bounded by $\E (\text{det}(\bfR_\two)^{-1/2})$, where $\bfR_\two$ is the conditional covariance of $\utwo$ given the trajectory of $\uone$. Notably, $\bfR_\two$ is completely determined by the underlying dynamical systems and has no dependence on the sample size $L$.
In contrast, if a direct kernel density method is applied to recover the PDF of $\utwo$, then the bandwidth of the kernel $H$ is scaled as the reciprocal of  $L$ to a certain power in order to minimize the MISE and the resulting MISE is  proportional to $L^{-1/N_\two}$, which means $L$ has to increase exponentially with $N_\two$ to guarantee the accuracy in the solution. This indicates the curse of dimensionality in the direct kernel density estimation and other smoothed versions of Monte Carlo methods.
Such a notorious issue is overcome by the efficient statistically accurate algorithms due to the independence between $\bfR_\two$ and $L$ in the high-dimensional subspace of $\utwo$.
Another significant feature of the efficient statistically accurate algorithms is their long term persistence, which is affirmed by  \cref{thm:ergodic} in a rigorous way provided that the joint process $(\mathbf{u}_\mathbf{I},\mathbf{u}_\two)$ is controllable and stochastically stable.  \cref{thm:ergodic} also supplies a lower bound of $\mathbf{R}_\two$ using the controllability condition. In addition, \cref{prop:SF} demonstrates that  dynamical systems with  energy conserving quadratic nonlinear interactions as in most geophysical and engineering turbulence \cite{majda2016introduction} automatically satisfy all the conditions for the long time persistence, which justifies the skillful performance of the efficient statistically accurate algorithms in the numerical tests reported in \cite{chen2017efficient}.  Further validations of the controllability and other theoretical conditions in the algorithms are demonstrated in the numerical simulations at the end of this article.

The remaining of this article is organized as follows. The high-dimensional nonlinear turbulent dynamical systems with conditional Gaussian structures are summarized in \cref{Sec:Systems}, which is followed by a brief review of the efficient statistically accurate algorithms in \cite{chen2017efficient} for solving the PDFs of such kind of systems. The main theoretical results are shown in \cref{Sec:Theorems}, where the proofs are included in \cref{Sec:Proofs} and the appendix. In \cref{Sec:Numerics}, numerical tests on a nonlinear triad model and its modified versions are used to validate the theoretical results. Conclusion and discussions are given in \cref{Sec:Conclusion}.

\section{Review of the efficient statistically accurate algorithms for solving the PDFs of nonlinear dynamical systems with conditional Gaussian structures}\label{Sec:Systems}
\subsection{High-dimensional conditional Gaussian models with nonlinear and intermittent dynamical features }
The general framework of high-dimensional conditional Gaussian models is given as follows \cite{liptser2001statistics, chen2016filtering}:
\begin{subequations}\label{Conditional_Gaussian_System}
\begin{align}
    d\mathbf{u}_{\mathbf{I}} &= [\mathbf{A}_0(t,\mathbf{u}_{\mathbf{I}})+\mathbf{A}_1(t,\mathbf{u}_{\mathbf{I}})\mathbf{u}_{\two}]dt + \boldsymbol{\Sigma}_{\mathbf{I}}(t,\mathbf{u}_{\mathbf{I}})d\mathbf{W}_{\mathbf{I}}(t),\label{Conditional_Gaussian_System1}\\
    d\mathbf{u}_{\two} &= [\mathbf{a}_0(t,\mathbf{u}_{\mathbf{I}})+\mathbf{a}_1(t,\mathbf{u}_{\mathbf{I}})\mathbf{u}_{\two}]dt + \boldsymbol{\Sigma}_{\two}(t,\mathbf{u}_{\mathbf{I}})d\mathbf{W}_{\two}(t), \label{Conditional_Gaussian_System2}
\end{align}
\end{subequations}
where the state variables are $\mathbf{u} = (\mathbf{u}_{\mathbf{I}}, \mathbf{u}_{\two})$ with both $\mathbf{u}_{\mathbf{I}}\in R^{N_\mathbf{I}}$ and $\mathbf{u}_{\two}\in R^{N_\two}$ being multidimensional variables. In \eqref{Conditional_Gaussian_System}, $\mathbf{A}_0, \mathbf{A}_1, \mathbf{a}_0, \mathbf{a}_1, \boldsymbol{\Sigma}_{\mathbf{I}}$ and $\boldsymbol{\Sigma}_{\two}$ are vectors and matrices that are functions of  time $t$ and the state variables $\mathbf{u}_{\mathbf{I}}$, and $\mathbf{W}_{\mathbf{I}}(t)$ and $\mathbf{W}_{\two}(t)$ are independent Wiener processes. Here the noise coefficient matrix $\boldsymbol{\Sigma}_{\mathbf{I}}$ is non-degenerated in order to guarantee the observability while there is no special requirement for $\boldsymbol{\Sigma}_{\two}$.
The dynamics \eqref{Conditional_Gaussian_System} are named as conditional Gaussian systems due to the fact that once a single trajectory $\mathbf{u}_{\mathbf{I}}(s)$ for $s\leq t$ is given, $\mathbf{u}_{\two}(t)$ conditioned on $\mathbf{u}_{\mathbf{I}}(s)$ becomes a Gaussian process with mean $\mathbf{\bar{u}}_{\two}(t)$ and covariance $\mathbf{R}_{\two}(t)$, i.e.,
\begin{equation}\label{CG_PDF}
    p\big(\mathbf{u}_{\two}(t)|\mathbf{u}_{\mathbf{I}}(s\leq t)\big) \sim \mathcal{N}(\mathbf{\bar{u}}_{\two}(t), \mathbf{R}_{\two}(t)).
\end{equation}
Despite the conditional Gaussianity, the coupled system \eqref{Conditional_Gaussian_System} remains highly nonlinear and is able to capture the strong non-Gaussian features as observed in nature \cite{chen2016filtering}. One of the desirable properties of the conditional Gaussian system \eqref{Conditional_Gaussian_System} is that the conditional distribution in \eqref{CG_PDF} has the following closed analytical form \cite{liptser2001statistics},
\begin{equation}\label{CG_Result}
\begin{aligned}
        d\mathbf{\bar{u}}_{\two}(t) = & [\mathbf{a}_0(t,\mathbf{u}_{\mathbf{I}})+\mathbf{a}_1(t,\mathbf{u}_{\mathbf{I}})\mathbf{\bar{u}}_{\two}]dt+(\mathbf{R}_{\two}\mathbf{A}^{*}_1(t,\mathbf{u}_{\mathbf{I}}))
        (\boldsymbol{\Sigma}_{\mathbf{I}}\boldsymbol{\Sigma}_{\mathbf{I}}^*)^{-1}(t,\mathbf{u}_{\mathbf{I}})\times\\
        &\qquad\qquad\qquad\qquad
    [d\mathbf{u}_{\mathbf{I}}-(\mathbf{A}_0(t,\mathbf{u}_{\mathbf{I}})+\mathbf{A}_1(t,\mathbf{u}_{\mathbf{I}})\mathbf{\bar{u}}_{\two})dt],\\
    d\mathbf{R}_{\two}(t) = & \left\{\mathbf{a}_1(t,\mathbf{u}_{\mathbf{I}})\mathbf{R}_{\two}+\mathbf{R}_{\two}\mathbf{a}^{*}_1(t,\mathbf{u}_{\mathbf{I}})+(\boldsymbol{\Sigma}_{\two}\boldsymbol{\Sigma}_{\two}^*)(t,\mathbf{u}_{\mathbf{I}})\right.\\
    &\qquad\qquad\left.-(\mathbf{R}_{\two}\mathbf{A}^{*}_1(t,\mathbf{u}_{\mathbf{I}}))
    (\boldsymbol{\Sigma}_{\mathbf{I}}\boldsymbol{\Sigma}_{\mathbf{I}}^*)^{-1}(t,\mathbf{u}_{\mathbf{I}})(\mathbf{R}_{\two}\mathbf{A}^{*}_1(t,\mathbf{u}_{\mathbf{I}}))^*\right\}dt.
\end{aligned}
\end{equation}

In most geophysical and engineering turbulent dynamical systems, the nonlinear terms such as the nonlinear advection have  quadratic forms and these quadratic nonlinear interactions conserve energy \cite{harlim2014ensemble, majda2015statistical, majda2012physics, majda2016introduction, majda1999models, majda2001mathematical}. The nonlinear interactions allow  energy transfer between different scales that induces  intermittent instabilities in the turbulent dynamical systems. Such  instabilities are then mitigated by energy-conserving quadratic nonlinear interactions that transfer energy back to the linearly stable modes where it is dissipated, resulting in a statistical steady state. Note that the nonlinear turbulent systems without the energy-conserving nonlinear interactions may suffer from non-physical finite-time blow up of statistical solutions and pathological behavior of the related invariant measure \cite{majda2012fundamental}. Mathematically, the turbulent dynamical systems with energy-conserving quadratic nonlinear interactions have the following abstract forms:
\begin{equation}\label{EnergyConserveModel}
  d\mathbf{u} = \big[ -\Lambda\mathbf{u} + \mathbf{B}(\mathbf{u},\mathbf{u}) + \mathbf{F}(t) \big] dt + \boldsymbol{\Sigma}(t,\mathbf{u})d\mathbf{W}(t),
\end{equation}
where $-\Lambda=\mathbf{L}+\mathbf{D}$. Here, $\mathbf{L}$ is a skew-symmetric linear operator that can represent the $\beta$ effect of Earth's curvature and topography, while $\mathbf{D}$ is a negative definite symmetric operator representing dissipative processes such as surface drag, radiative damping and viscosity, etc \cite{salmon1998lectures, thompson2006scaling, majda2006nonlinear, vallis2017atmospheric}. The quadratic operator $\mathbf{B}(\mathbf{u},\mathbf{u})$ conserves  energy by itself so that it satisfies the following:
\begin{equation}\label{EnergyConserving}
  \mathbf{u}\cdot\mathbf{B}(\mathbf{u},\mathbf{u})  = 0.
\end{equation}
Notably, a rich class of turbulent models with energy-conserving quadratic nonlinear interactions in \eqref{EnergyConserveModel} belongs to the conditional Gaussian systems \eqref{Conditional_Gaussian_System}, including the noisy version of Lorenz 63 model \cite{lorenz1963deterministic}, the reduced stochastic climate model \cite{majda2008applied, majda2005information}, the nonlinear triad model mimicking structural features of low-frequency variability of GCMs with non-Gaussian features \cite{majda2009normal}, the modified conceptual dynamical model for turbulence \cite{majda2014conceptual}, and the two-layer Lorenz 96 model \cite{lee2017multiscale}. See \cite{chen2017efficient} and its appendix for a general framework of conditional Gaussian systems with energy-conserving nonlinear interactions as well as concrete examples.

\subsection{The efficient statistically accurate algorithms for solving the PDFs of the conditional Gaussian systems}
Assume the dimension $N_\mathbf{I}$ of the observed variables is low, while the dimension $N_\two$ of the unobserved variables can be high. This is the typical scenario in most turbulent dynamical systems, where the low-dimensional variables $\mathbf{u}_{\mathbf{I}}$ represent large scales or resolved variables while the high-dimensional ones $\mathbf{u}_{\two}$ stand for the unresolved and unobserved variables \cite{majda2014conceptual, majda2016introduction}.

Below, we summarize the procedures of the efficient statistical algorithms developed in \cite{chen2017efficient}. First, we generate $L$ independent trajectories from the stochastic dynamical systems \eqref{Conditional_Gaussian_System}. 
In fact,
the only information that is required for these algorithms is $L$ independent trajectories of the observed variables, namely $\mathbf{u}^{1}_\mathbf{I}(s\leq t),\ldots,\mathbf{u}^{L}_\mathbf{I}(s\leq t)$. 
Then, different strategies are used to deal with the observed variables $\mathbf{u}_{\mathbf{I}}$ and unobserved variables $\mathbf{u}_{\two}$, respectively.
The PDF of $\mathbf{u}_{\two}$ is estimated via a parametric method that exploits the closed form of the conditional Gaussian posterior statistics \eqref{CG_Result},
\begin{equation}\label{PDF_U_II}
   p(\mathbf{u}_{\two}(t)) = \lim_{L\to\infty}\frac{1}{L}\sum_{i=1}^Lp(\mathbf{u}_{\two}(t)|\mathbf{u}^i_{\mathbf{I}}(s\leq t)).
\end{equation}
Note that the limit $L\to\infty$ in \eqref{PDF_U_II} (as well as \eqref{PDF_U_I} and \eqref{Joint0} below) is taken 
to illustrate the statistical intuition, while the estimator is the non-asymptotic version. 
On the other hand,  a  Gaussian kernel density estimation method is used for solving the PDF of the observed variables $\mathbf{u}_{\mathbf{I}}$,
\begin{equation}\label{PDF_U_I}
   p\big(\mathbf{u}_{\mathbf{I}}(t)\big)= \lim_{L\to\infty}\frac{1}{L}\sum_{i=1}^L K_\mathbf{H}\Big(\mathbf{u}_{\mathbf{I}}(t)-\mathbf{u}^i_{\mathbf{I}}(t)\Big),
\end{equation}
where $\mathbf{H}=\mathbf{H}(t)$ is the bandwidth matrix, and $K_\mathbf{H}(\cdot)$ is a Gaussian kernel centered at each sample point with covariance $\mathbf{H}(t)$,
\begin{equation}\label{U_I_Mixture}
  K_\mathbf{H}\Big(\mathbf{u}_{\mathbf{I}}(t)-\mathbf{u}^i_{\mathbf{I}}(t)\Big)\sim \mathcal{N}\Big(\mathbf{u}^i_{\mathbf{I}}(t), \mathbf{H}(t)\Big).
\end{equation}
Below, we simply use $\mathbf{H}$ to represent the bandwidth at time $t$ for the notation simplicity.

The kernel density estimation algorithm here involves a ``solve-the-equation plug-in'' approach for optimizing the bandwidth, the idea of which was originally proposed in  \cite{botev2010kernel}. The solve-the-equation approach does not impose any requirement for the profile of the underlying PDF. Therefore, it works for the non-Gaussian cases and the
computational cost comes from numerically solving a scalar high order algebraic equation for the optimal bandwidth in order to minimize the asymptotic mean integrated squared error (AMISE) in the estimator. Furthermore, we adopt a diagonal matrix for $\mathbf{H}$. This greatly reduces the computational costs while remains the results with reasonable accuracy. Note that in the limit $L\to\infty$, the kernel density method is simply the Monte Carlo simulation, where the bandwidth shrinks to zero.

Finally, with \eqref{PDF_U_II} and \eqref{PDF_U_I} in hand, a hybrid method is applied to solve the joint PDF of $\mathbf{u}_{\mathbf{I}}$ and $\mathbf{u}_{\two}$  through a Gaussian mixture,
\begin{equation}\label{Joint0}
    p(\mathbf{u}_{\mathbf{I}}(t),\mathbf{u}_{\two}(t)) = \lim_{L\to\infty} \frac{1}{L}\sum_{i=1}^L \Big(K_\mathbf{H}(\mathbf{u}_{\mathbf{I}}(t)-\mathbf{u}_{\mathbf{I}}^i(t))\cdot p(\mathbf{u}_{\two}(t)|\mathbf{u}^i_{\mathbf{I}}(s\leq t))\Big).
\end{equation}
One important features of these algorithms is that the solutions of both the two marginal distributions in \eqref{PDF_U_II} and \eqref{PDF_U_I} and the joint distribution in \eqref{Joint0} are consistent with those of solving the Fokker-Planck equation for $p(\mathbf{u}_\two(t)), p(\mathbf{u}_\mathbf{I}(t))$ and $p(\mathbf{u}_{\mathbf{I}}(t),\mathbf{u}_{\two}(t))$, respectively.

Practically, $L\sim O(100)$ is sufficient for the efficient hybrid method \eqref{Joint0} to solve the joint PDF with $N_\one\leq 3$ and $N_\two\sim 10$ while an order of $O(10^6)$ samples is required for solving the joint PDF using classical Monte Carlo methods to reach the same accuracy for a $6$ dimensional turbulent system \cite{chen2017efficient}.
Since $L$ is only of order $O(100)$, the $L$ independent trajectories  $\mathbf{u}^{1}_\mathbf{I}(s\leq t),\ldots,\mathbf{u}^{L}_\mathbf{I}(s\leq t)$  can be obtained by running a Monte Carlo simulation for the coupled system \eqref{Conditional_Gaussian_System} with $L$ samples, which is computationally affordable. In addition, the closed form of the $L$ conditional distributions in \eqref{PDF_U_II} can be computed in a parallel way due to their independence, which further reduces the computational cost. See \cite{chen2017efficient} for more details.

\section{Main theoretical results}\label{Sec:Theorems}
The rigorous analysis of the efficient statistically accurate algorithms involving the hybrid strategy \eqref{Joint0} is studied in this section. For comparison, the theoretical results by applying the kernel density estimation method to the full system \eqref{Conditional_Gaussian_System} is also illustrated. Note that the kernel density estimation is essentially the Monte Carlo simulation when $L$ is large and therefore it suffers from the curse of dimensionality. Such comparison facilitates the understanding of the advantages of the efficient algorithm \eqref{Joint0} in recovering the high-dimensional subspace of $\mathbf{u}_\two$ using only a small number of samples. Below,  ${p}_t(\uone,\utwo)$ represents the true PDF while $\tilde{p}_t(\uone,\utwo)$ and $\hat{p}_t(\uone,\utwo)$ stand for the recovered PDFs based on the pure kernel density estimation and the efficient hybrid method \eqref{Joint0}, respectively. \medskip

\noindent\emph{Kernel density estimation for the joint PDF.}
\begin{align}
\tilde{p}_t(\uone,\utwo)&=\frac{1}{L}\sum_{i=1}^L K_H ((\uone,\utwo)- (\uone^i(t), \utwo^{i}(t))),\label{eqn:directkernel}\\
\mbox{with~~}K_H(\uone,\utwo)&= (2\pi H)^{-\frac{N_\one+N_\two}{2}} \exp\left(-\frac1{2H} \sum_{i=1}^{N_\one} c^2_i\bfu_{\one,i}^2-\frac1{2H} \sum_{i=1}^{N_\two} c^2_{i+N_\one}\bfu_{\two,i}^2\right).\label{eqn:kernel}
\end{align}
\noindent\emph{Hybrid method --- kernel density estimation for $\mathbf{u}_\mathbf{I}$ and conditional Gaussian mixture for $\mathbf{u}_\two$.}
\begin{align}
\hat{p}_t(\uone,\utwo)&= \frac{1}{L}\sum_{i=1}^L K_H (\uone- \uone^{i}(t))p(\utwo| \uone^{i}(s\leq t)).\label{eqn:condGauss}\\
\mbox{with~~}K_H(\uone)&= (2\pi H)^{-\frac{N_\one}{2}} \exp\left(-\frac1{2H} \sum_{i=1}^{N_\one} c^2_i \bfu_{\one,i}^2\right).\label{eqn:kernel2}
\end{align}
In \eqref{eqn:kernel} and \eqref{eqn:kernel2}, we let $\mathbf{H}=H\mathbf{C}$ as in \eqref{Joint0}. The scalar $H$ is the scale of the bandwidth  \cite{silverman1981using, wand1992error, wand1994multivariate, botev2010kernel} and  $c^2_i$ are the diagonal terms of $\bfC$ such that $c^2_iH$ represents the bandwidth in one direction.
In the following, we mostly concern the performance of $\tilde{p}_t$ and $\hat{p}_t$ when $L$ is large.


One standard metric to measure the performance of a density estimator is the
 mean integrated squared error (MISE). The MISE of the hybrid method, for example, is the average $L^2$ distance to the true density:
\[
 \text{MISE}=\E\int |p_t(\uone,\utwo)-\hat{p}_t(\uone,\utwo)|^2d\uone d\utwo.
\]
Note that $\hat{p}_t$ relies on the realization of the samples and therefore it is natural to take the expectation of the distance.

Applying the Bias-Variance decomposition \cite{friedman1997bias} to the MISE yields
\begin{equation}
\label{eqn:MISEdec}
\text{MISE}=\underbrace{\E \int |\hat{p}_t(\uone,\utwo)- \bar{p}_t(\uone,\utwo)|^2 d\uone d\utwo}_{\mbox{Bias}}+\underbrace{\int |p_t(\uone,\utwo)- \bar{p}_t(\uone,\utwo)|^2 d\uone d\utwo}_{\mbox{Variance}},
\end{equation}
where $\bar{p}_t:=\E \hat{p}_t$. The variance part comes from the sampling error of the method and the bias part comes from the usage of the kernel method. See \eqref{eqn:MISEdecomp} for a direct proof of this decomposition.

The MISE and its decomposition \eqref{eqn:MISEdec} will be used to understand the performance of the two density estimation methods in \eqref{eqn:directkernel} and \eqref{eqn:condGauss}, where the scenarios with a large number of samples and a large dimension of the variables $N_\two$ are of particular interest. Main results are presented below and the rigorous proofs of these results are shown in \cref{Sec:Proofs}. Note that despite quite a few studies of kernel density estimation, especially in the asymptotic limit, exist in literature  \cite{silverman1981using, wand1992error, wand1994multivariate, jones1996brief, botev2010kernel}, no analysis has been established for the hybrid method \eqref{eqn:condGauss}. Moreover, the results here are all non-asymptotic, and therefore they hold for arbitrary choice of bandwidth parameters. This is important in practice, as the bandwidth matrix $\mathbf{H}(t)$ may change with $t$.

\subsection{MISE of the hybrid method}
The main result of our analysis is the following:
\label{sec:joint}
\begin{thm}
\label{thm:MISE}
The two parts of MISE in \eqref{eqn:MISEdec} for the hybrid method \eqref{eqn:condGauss} are bounded:
\begin{equation}
\label{eqn:MISEbound}
\begin{gathered}
\hat{p}_t \,\,\mbox{Variance}\leq \frac1L\E \left(\prod_{i=1}^{N_\one} (\pi Hc_i^2) \text{det}(\pi \bfR_\two(t)) \right)^{-\frac12},\\
\hat{p}_t \,\,\mbox{Bias}\leq \frac{1+\delta}{4} H^2 J\left( \sum_{i=1}^{N_\one} c_i^2\partial^2_{\bfu_{\one,i}^2} p_t(\uone,\utwo)\right)+\frac{1+\delta^{-1}}2 M^2 H^3\left(\sum_{i=1}^{N_\one}c_i^2\right)^3 J(M(\uone,\utwo)).
\end{gathered}
\end{equation}
Here $\delta$ is any fixed strictly positive number. $\E$ is the statistical average.  $J(f(\uone,\utwo))$  denotes the integral $\int f^2(\uone,\utwo) d\uone d\utwo$.
The function $M(\uone,\utwo)$ is an upper bound of the third order directional derivative of $p_t$ in the direction of $\uone$ around $(\uone,\utwo)$.  That is, we assume
\begin{equation}
\label{eqn:thirder}
\left| \frac{d^3}{ds^3}p_t(\uone+s\bfv, \utwo)\right|\leq M(\uone,\utwo),\quad \text{for all } \bfv\in \reals^{N_\one}, |\bfv|\leq 1.
\end{equation}
\end{thm}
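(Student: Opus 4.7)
The plan is to work through the bias--variance decomposition \eqref{eqn:MISEdec} separately, exploiting independence of the $L$ sample trajectories for the variance, and a Taylor expansion against $K_H$ for the bias.

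For the variance, I would observe that $\hat p_t(\uone,\utwo)$ is an empirical average of $L$ i.i.d.\ copies of $K_H(\uone - \uone^1(t))\,p(\utwo\mid\uone^1(s\leq t))$, so the integrated variance is bounded by $L^{-1}$ times the integral of the raw second moment of a single summand. Tonelli's theorem lets me pull the expectation outside the $(\uone,\utwo)$ integral; the $\uone$-integral of $K_H^2$ factors into a product of one-dimensional Gaussian square-integrals yielding $\prod_i(\pi H c_i^2)^{-1/2}$ (up to absolute constants), and the $\utwo$-integral of $p(\utwo\mid\uone^1(s\leq t))^2$, using the conditional Gaussianity $\mathcal N(\bar{\mathbf u}_\two,\bfR_\two)$ from \eqref{CG_PDF}, evaluates to $\det(\pi\bfR_\two)^{-1/2}$. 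Reinserting the outer expectation over the trajectory recovers the claimed variance bound.

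For the bias, the crucial identity is $\bar p_t(\uone,\utwo) = \int K_H(\uone-\uone')\,p_t(\uone',\utwo)\,d\uone'$. I would derive it by conditioning on $\uone^1(t)$ and invoking the tower property, writing $p_t(\uone',\utwo) = p_{\uone(t)}(\uone')\,\E\bigl[p(\utwo\mid\uone(s\leq t))\mid\uone(t)=\uone'\bigr]$; this is the step where the conditional Gaussian structure is essential, since it turns the trajectory-conditional density into the actual marginal. With the convolution identity in hand, I Taylor expand $p_t(\uone',\utwo)$ in $\uone'$ about $\uone$ to second order in the $\uone$-direction. The symmetry of $K_H$ kills the first-order term, and its diagonal product form kills the off-diagonal second-order terms, leaving a principal contribution of order $H\sum_i c_i^{2}\partial^2_{\uone_i^2}p_t$. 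The cubic Taylor remainder is bounded pointwise by $\tfrac{1}{6}|\uone'-\uone|^3 M(\uone,\utwo)$ via \eqref{eqn:thirder}. Applying Young's inequality $(a+b)^2\le(1+\delta)a^2+(1+\delta^{-1})b^2$ to the squared bias and integrating---using that $\int K_H(x)|x|^{2k}\,dx$ is dominated by $H^k(\sum_i c_i^2)^k$ by Cauchy--Schwarz on the coordinate moments---produces the two terms of the bias bound.

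The main obstacle I expect is the bookkeeping in the cubic remainder needed to recover precisely the factor $H^3(\sum_i c_i^2)^3$ and to combine it cleanly with $J(M(\uone,\utwo))$ after the $(\uone,\utwo)$ integration. Concretely, $|\uone'-\uone|^3$ must be majorized by a symmetric polynomial compatible with the diagonal product form of $K_H$, and \eqref{eqn:thirder} has to be invoked uniformly along the segment from $\uone$ to $\uone'$ on the effective support of the kernel. The variance side and the convolution identity are routine; aligning the dimensional constants in the bias with the precise form stated in \eqref{eqn:MISEbound} is the delicate piece.
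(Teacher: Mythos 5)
Your proposal is correct and follows essentially the same route as the paper's proof: the variance is handled by i.i.d.\ averaging plus the exact $L^2$ norm of the Gaussian factors (the paper's \cref{lem:L2Gaussian}), and the bias via the convolution identity $\bar{p}_t=K_H*p_t$ (your tower-property derivation is the paper's delta-function/test-function argument in different words) followed by the same symmetric Taylor expansion, cubic remainder bound from \eqref{eqn:thirder}, and Young's inequality (the paper's \cref{lem:kernel}). The only bookkeeping you flag as delicate is resolved in the paper by the Gaussian sixth-moment identity $\E|Z|^6=15H^3(\sum_i c_i^2)^3$ together with Cauchy--Schwarz, exactly as you anticipate.
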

In a practical scenario,  as the sample size $L$ increases,  bandwidth $H$ can decrease, so that both the variance and bias terms decrease to zero. By taking $\delta$ close to zero and ignoring the higher order term in the bias upper bound, we recover an upper bound similar to the asymptotic MISE (AMISE)  in \cite[Eqn. (2.6)]{wand1992error}, except that our method also consists a random component of $\bfR_\two(t)$:
\begin{equation}\label{eqn:AMISE}
\text{AMISE}\leq \frac1L\E \left(\prod_{i=1}^{N_\one} (\pi Hc_i^2) \text{det}(\pi \bfR_\two(t)) \right)^{-\frac12}+\frac{1}{4} H^2 J\left( \sum_{i=1}^{N_\one} c_i^2\partial^2_{\bfu_{\one,i}^2} p_t(\uone,\utwo)\right),
\end{equation}
where the two terms on the right hand side represents the variance and bias, respectively.
It is natural to equate the order of these two terms, that is letting $L H^{-\frac12N_\one}\sim O(H^2)$.
This leads to the common choice of the bandwidth \cite{jones1996brief}
\begin{equation}
\label{eqn:MISEorder}
H\sim O\left(L^{-\frac{2}{4+N_\one}}\right)\quad \text{and consequentially}\quad \text{MISE}\sim O\left(L^{-\frac{4}{4+N_\one}}\right).
\end{equation}

Notably, the variance part of MISE in \eqref{eqn:AMISE} depends on $\utwo$  through $\E \sqrt{\text{det} (\pi\bfR_\two(t))}^{-1}$, which indicates that the hybrid method in \eqref{eqn:condGauss} performs better with a larger $\bfR_\two(t)$. This is consistent with the intuition that a large $\bfR_\two(t)$ corresponds to a conditional distribution $\mathcal{N}(\barut(t), \bfR_\two(t))$ with a wide band that is able to recover a sufficient portion of the PDF.


\subsection{Comparison between the two density estimators}
\label{sec:performance}
\cref{thm:MISE} already reveals the advantage of the hybrid method \eqref{eqn:condGauss} over the the direct kernel density method  \eqref{eqn:directkernel}. For a qualitative comparison of the two methods, we can view the latter as a trivial application of the hybrid method by taking $\uone'=(\uone,\utwo)$ and  $\utwo'=\emptyset
$, and therefore $\utwo'$ is trivially linear  conditioned on $\uone'$. A direct application of \cref{thm:MISE} leads to
\begin{equation}
\label{eqn:MISEkernel}
\begin{gathered}
\tilde{p}_t \,\,\text{Variance}\leq \frac1L\E \left(\prod_{i=1}^{N_\one+N_\two} \pi Hc_i^2  \right)^{-\frac12},\\
\tilde{p}_t\,\,\text{Bias}\leq \frac{(1+\delta)H^2}4 J\left( \sum_{i=1}^{N_\one} c_i^2\partial^2_{\bfu_{\one,i}^2} p_t+\sum_{i=1}^{N_\two} c_{i+N_\one}^2\partial^2_{\bfu_{\two,i}^2} p_t\right)+\frac{(1+\delta^{-1})H^3}2  \left(\sum_{i=1}^{N_\one}c_i^2\right)^3J(\widetilde{M}).
\end{gathered}
\end{equation}
where $\widetilde{M}\geq M$ is the upper bound for third order directional derivative in $\reals^{N_\one+N_\two}$ of $p_t$. Similar results in the asymptotic setting can be found in \cite{wand1992error}.

 If we use the same bandwidth $H$ and sample size $L$ in both method, Comparing \eqref{eqn:MISEkernel} with \eqref{eqn:MISEbound}, we find that
\[
\tilde{p}_t\,\,\text{Bias bound}\geq \hat{p}_t\,\,\text{Bias bound},
\]
and moreover
\[
\frac{\tilde{p}_t\,\,\text{Variance bound}}{\hat{p}_t\,\,\text{Variance bound}}=\frac{H^{-\frac{N_\two}{2}}\prod_{i=1}^{N_\two} c_{i+N_\one}}{\E \sqrt{\text{det}(\bfR_\two(t))}^{-1}}.
\]
Practically, a large $L$ is chosen to guarantee the accuracy of the recovered PDFs, which corresponds to a small bandwidth $H$. Then the variance part of the direct kernel method is several magnitudes larger than that of hybrid method, especially when the dimension $N_\two$ is high.

As discussed above, one would optimize the choice of $H$ such that the two quantities in \eqref{eqn:MISEkernel} are of the same order, which leads to the scaling $H\sim O\left(L^{-\frac{2}{4+N_\one+N_\two}}\right)$, and also the overall $\text{MISE}\sim O\left(L^{-\frac{4}{4+N_\one+N_\two}}\right)$, However, This is much worse than the MISE associated with the conditional Gaussian method  \eqref{eqn:MISEorder} when $N_\two$ is large.  Alternatively,  if one wants the performance of the direct kernel method to be the same as the conditional Gaussian one \eqref{eqn:MISEorder}, then the sample size needs to increase to $\widetilde{L}=L^{\frac{4+N_\one+N_\two}{4+N_\one}}$, which can be many magnitudes larger than $L$.

In conclusion, direct application of the kernel method suffers from the curse of dimensionality. This is due to the fact that the variance scales with the bandwidth as $H^{-\frac{N_\one+N_\two}{2}}$, and therefore one needs to increase sample size exponentially with the dimension in order to have a small bandwidth that guarantees the accuracy of the recovered PDFs. However, when $H$ is small, the kernel density method approximates the standard Monte Carlo simulation, which suffers from the curse of dimensionality. On the other hand, the hybrid method resolves this issue by estimating the $\utwo$ part using a parametric method where the bandwidth (or the covariance) does not depend on $L$. Therefore, the performance of the hybrid method \eqref{eqn:condGauss} can be much superior than the direct kernel method \eqref{eqn:directkernel} when $N_\two$ is large.

\subsection{Marginal distribution of $\utwo(t)$}
There are scenarios where the focus is only on estimating the density of $\utwo(t)$. Again, both methods can be applied here. The direct kernel method \eqref{eqn:directkernel} results in the estimation of the marginal density
\begin{equation}\label{eqn:marginalkernel}
\tilde{p}_t(\utwo):=\frac1L\sum_{i=1}^L K_H(\utwo-\utwo^{i}(t)),\quad K_H(\utwo)= (2\pi H)^{-\frac{N_\two}{2}} \exp\left(-\frac1{2H} \sum_{i=1}^{N_\two} c^2_{i+N_\one} \bfu_{\two,i}^2\right).
\end{equation}
On the other hand, the hybrid method \eqref{eqn:condGauss} simply becomes a conditional Gaussian mixture method which contains no kernel density estimation
\begin{equation}\label{eqn:marginalcondgauss}
\hat{p}_t(\utwo):=\frac1L\sum_{i=1}^Lp(\utwo| \uone^{i}(s\leq t)).
\end{equation}
It is straightforward to check these density estimators are the marginal PDFs of the joint distributions in \eqref{eqn:directkernel} and \eqref{eqn:condGauss}.

Since there is no kernel involved for the conditional Gaussian method in \eqref{eqn:marginalcondgauss}, the MISE has a simple bound without the bias part:
\begin{prop}
\label{prop:marginal}
The marginal MISE of the conditional Gaussian estimator in \eqref{eqn:marginalcondgauss} is bounded as
\begin{equation}
\label{eqn:marginalMISE}
\hat{p}_t \,\,\text{MISE} \leq \frac{1}{L}\E \left(\text{det}(\pi \bfR_\two(t))\right)^{-\frac12}.
\end{equation}
\end{prop}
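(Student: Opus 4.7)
The plan is to exploit the fact that the conditional Gaussian estimator in \eqref{eqn:marginalcondgauss} is an \emph{unbiased} estimator of the true marginal density $p_t(\utwo)$, so the bias contribution in the decomposition \eqref{eqn:MISEdec} vanishes identically and the problem reduces to a pure variance computation.

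First I would apply the tower property of conditional expectation together with \eqref{CG_PDF}: since $p(\utwo\mid\uone(s\leq t))$ is the conditional density of $\utwo(t)$ given the trajectory $\uone(s\leq t)$, integrating out that trajectory yields the marginal, so $\E\bigl[p(\utwo\mid\uone^{i}(s\leq t))\bigr]=p_t(\utwo)$ pointwise in $\utwo$ for every $i$. Averaging over $i$ gives $\E\hat p_t(\utwo)=p_t(\utwo)$, which makes the bias term in \eqref{eqn:MISEdec} zero and leaves
\[
\text{MISE}=\int \text{Var}\bigl(\hat p_t(\utwo)\bigr)\, d\utwo.
\]
Because the $L$ trajectories $\uone^{i}(s\leq t)$ are i.i.d., the variance of the empirical average factors as $L^{-1}$ times the single-sample variance, which I would bound from above by the corresponding second moment:
\[
\text{Var}\bigl(\hat p_t(\utwo)\bigr)=\frac1L\text{Var}\bigl(p(\utwo\mid\uone(s\leq t))\bigr)\leq\frac1L\E\bigl[p(\utwo\mid\uone(s\leq t))^2\bigr].
\]
An application of Fubini then swaps the $d\utwo$ integration with the expectation.

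The remaining step is the standard Gaussian $L^2$ identity: for any mean $\barut$ and positive-definite covariance $\bfR_\two$, the square of the Gaussian density $\mathcal{N}(\barut,\bfR_\two)$ integrates to $\bigl((4\pi)^{N_\two}\det\bfR_\two\bigr)^{-1/2}$, which is bounded above by $\bigl(\det(\pi\bfR_\two)\bigr)^{-1/2}$ since $4\geq 1$. Applying this to the conditional Gaussian density inside the expectation yields exactly the stated bound \eqref{eqn:marginalMISE}. I do not anticipate any real obstacle; the qualitative point driving the argument is that, unlike in \cref{thm:MISE}, no kernel convolution is performed on $\utwo$, so there is no bias to control and the entire MISE collapses onto the three elementary estimates above.
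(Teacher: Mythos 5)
Your argument is correct and follows the paper's proof essentially step for step: unbiasedness of the conditional density (via the tower property) kills the bias term, the i.i.d.\ structure of the $L$ trajectories gives the $1/L$ factor, the single-sample variance is bounded by the second moment, and the Gaussian $L^2$ integral supplies the determinant. The one (harmless) discrepancy is that your value $\left(\det(4\pi\bfR_\two(t))\right)^{-1/2}$ for $\int p(\utwo\mid\uone(s\leq t))^2\,d\utwo$ is in fact the correct one, whereas the paper's \cref{lem:L2Gaussian} asserts the larger quantity $\left(\det(\pi\bfR_\two(t))\right)^{-1/2}$ as an equality (off by a factor $2^{N_\two/2}$); your explicit relaxation from $4\pi$ to $\pi$ is therefore sound and recovers exactly the stated bound \eqref{eqn:marginalMISE}.
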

Following the derivation of \eqref{eqn:MISEkernel},  the MISE of the direct kernel method in \eqref{eqn:marginalkernel} is given by
\begin{multline*}
\tilde{p}_t\,\,\text{MISE}\leq \frac1L\E \left(\prod_{i=1}^{N_\two} \pi Hc_{i+N_\one}^2  \right)^{-\frac12}+\frac{(1+\delta)H^2}4 J\left(\sum_{i=1}^{N_\two} c_{i+N_\one}^2\partial^2_{\bfu_{\two,i}^2} p_t\right)\\+\frac{(1+\delta^{-1})H^3}2  \left(\sum_{i=1}^{N_\one}c_i^2\right)^3J(\widetilde{M}).
\end{multline*}
With the optimal choice $H\sim O\left(L^{-\frac{2}{4+N_\two}}\right)$, the direct kernel method $\text{MISE}\sim O\left(L^{-\frac{4}{4+N_\two}}\right).$ The hybrid method with the conditional Gaussian mixture is clearly superior for marginal density estimation,  as its MISE \eqref{eqn:marginalMISE} is essentially $O(L^{-1})$, and the bandwidth $H$ has no dependence on $L$.

\subsection{Fixed subspace}
\label{sec:subspace}
In many scenarios, only a part of $\utwo$ is of practical interest. To this end, we consider here $\utwo^P=\bfP \utwo$, where  $\bfP:\reals^{N_\two}\mapsto \reals^{N_\two^P}$  maps  $\utwo$ onto a lower dimensional subspace. Below, we study the estimation of the density $p^P_t(\uone,\utwo^P)$ of $(\uone(t), \utwo^P(t))$ using the hybrid method.

It is straightforward to show the conditional distribution of $\utwo^P(t)$ given $\uone(s\leq t)$ follows the Gaussian density $p (\utwo^P |\uone(s \leq t))$ of the following form
\[
\text{det}(2\pi \bfP\bfR_\two(t)\bfP^*)^{-\frac12} \exp\left(-\tfrac12 (\utwo^P-\bfP\barut(t))^*[\bfP\bfR_\two(t)\bfP^*]^{-1} (\utwo^P-\bfP\barut(t)) \right).
\]
The density of $(\uone(t),\utwo^P(t))$ can be estimated by
\[
 \hat{p}^P_t(\uone,\utwo^P)= \frac{1}{L}\sum_{i=1}^L K_H (\uone- \uone^{i}(t))p(\utwo^P |\uone^{i}(s\leq t)).
\]
Following \cref{thm:MISE}, we can show that

\begin{cor}
\label{cor:marginal}
Under the same assumption as in \cref{thm:MISE}, the MISE decomposition of  $\hat{p}^P_t$ has the following two bounds
\[
\begin{gathered}
\hat{p}^P_t\text{  Variance}\leq \frac1L\E \left(\prod_{i=1}^{N_\one} (\pi Hc_i^2) \text{det}(\pi \bfP\bfR_\two(t)\bfP^*) \right)^{-\frac12},\\
\hat{p}^P_t\text{  Bias}\leq \frac{1+\delta}{4} H^2 J\left( \sum_{i=1}^{N_\one} c_i^2\partial^2_{\bfu_{\one,i}^2} p^P_t(\uone,\utwo^P)\right)+\frac{1+\delta^{-1}}2  H^3\left(\sum_{i=1}^{N_\one}c_i^2\right)^3 J(M^P(\uone,\utwo^P)),
\end{gathered}
\]
where $M^P$ is a upper bound of third order derivative of $p^P_t$ in $\uone$, as in \eqref{eqn:thirder}.
\end{cor}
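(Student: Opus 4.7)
The plan is to reduce Corollary~\ref{cor:marginal} directly to Theorem~\ref{thm:MISE} by observing that the projected process $(\uone(t),\utwo^P(t))$ still fits the conditional Gaussian template used there. Since $\bfP$ is a deterministic linear map, the conditional law of $\utwo^P(t)=\bfP\utwo(t)$ given $\uone(s\leq t)$ remains Gaussian with mean $\bfP\barut(t)$ and covariance $\bfP\bfR_\two(t)\bfP^*$, as already noted in the excerpt. Consequently $\hat{p}^P_t$ has exactly the structure of the estimator in \eqref{eqn:condGauss}, only with the conditional Gaussian factor supported on the image of $\bfP$ rather than on $\reals^{N_\two}$.

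First I would apply the bias--variance decomposition \eqref{eqn:MISEdec} to $\hat{p}^P_t$, writing $\bar p^P_t:=\E \hat{p}^P_t$. For the variance term, the key algebraic fact driving the bound in Theorem~\ref{thm:MISE} is the Gaussian self-convolution identity $\int |\mathcal{N}(y;m,\Sigma)|^2\, dy = \text{det}(4\pi\Sigma)^{-1/2}$ together with the analogous computation for the kernel $K_H$ in $\uone$. Substituting $\Sigma=\bfP\bfR_\two(t)\bfP^*$ into this identity and combining with the (unchanged) $\uone$ kernel integral immediately yields the stated variance upper bound, since the $L$ conditional Gaussian samples are i.i.d.\ so the variance reduces to $\frac{1}{L}$ times the second moment of a single summand.

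For the bias, the kernel $K_H$ acts only on the $\uone$ coordinates, so $\bar p^P_t$ is the convolution of $p^P_t$ with $K_H$ in $\uone$ alone. Taylor-expanding $p^P_t$ to second order in $\uone$, using the assumed uniform bound $M^P$ on the third-order directional derivative in $\uone$ to control the remainder, and balancing the quadratic and cubic contributions via Young's inequality with parameter $\delta$ reproduces verbatim the bias bound of Theorem~\ref{thm:MISE} with $p_t,M$ replaced by $p^P_t,M^P$. I expect no genuine obstacle: the proof of Theorem~\ref{thm:MISE} uses the joint density only through the conditional Gaussianity of the second block and the smoothness in the first block, both of which transfer unchanged to the projected setting. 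The only point requiring care is to make explicit that Theorem~\ref{thm:MISE} does not invoke the specific SDE form \eqref{Conditional_Gaussian_System2} for $\utwo$; once this is observed, a one-line invocation of the theorem with $\utwo$ replaced by $\utwo^P$ and $\bfR_\two$ by $\bfP\bfR_\two\bfP^*$ essentially suffices.
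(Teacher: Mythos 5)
Your proposal is correct and follows essentially the same route as the paper, whose proof of \cref{cor:marginal} simply states that the argument is identical to that of \cref{thm:MISE} once every density involving $\utwo$ is replaced by its $\utwo^P$ counterpart, i.e.\ the conditional Gaussian with covariance $\bfP\bfR_\two(t)\bfP^*$. One minor remark: the Gaussian self-convolution constant you quote, $\det(4\pi\Sigma)^{-1/2}$, is the sharp value, whereas the paper's \cref{lem:L2Gaussian} states $\det(\pi\Sigma)^{-1/2}$; since the former is smaller, both yield the claimed variance bound.
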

Notably, the variance term depends only on $\E \sqrt{\text{det}(\pi \bfP \bfR_\two(t) \bfP^*)}^{-1}$, where $\bfP\bfR_\two(t)\bfP^*$ is a $N_\two^P\times N_\two^P$ matrix that is independent of the components complementary to $\utwo^P(t)$. In other words,  the performance of the hybrid estimator  on a certain part of the components  is independent of the other components. This is particularly useful when $N^P_\two$ is small. Note that such a property also holds for the direct kernel method but in practice the kernel method works only for the case when $N_\two$ is small.

\subsection{Controllability and a lower bound of $\bfR_\two$}
\label{sec:lowerbound}
According to \cref{thm:MISE},  $\bfR_\two(t)$ controls the sampling variance term in the MISE. Therefore, it is desirable to derive  a lower bound for $\bfR_\two(t)$.  Note that in the conditional Gaussian system \eqref{Conditional_Gaussian_System},  $\uone$ can be interpreted as an observation of $\utwo$, and $p(\utwo|\uone(s\leq t))$ is essentially the optimal Kalman filter with covariance $\bfR_\two(t)$. Therefore, a lower bound of $\bfR_\two(t)$ can be guaranteed by the controllability of the associated signal-observation system. In short, the controllability condition ensures the noise in the system is regular enough such that the optimal filter is not accurate to a singular degree in any component. More discussions on the controllability of Kalman filters can be found in \cite{chui1999kalman, crisan2011oxford, majda2012filtering}. A recent work \cite{bishop2016stability} has summarized some of the major results in this area. It is noteworthy that since the term $\bfa_1$ depends on realization of $\uone$, both the controllability  condition and  the lower  bounds rely on the realization of $\uone$.

In our context, a  standard way to characterize this notion is the following assumption:
\begin{aspt}
\label{aspt:control}
Let $\calE_{s,t}$ be the matrix flow generated by $\bfa_1$:
\[
\frac{d}{dt}\calE_{s,t}=\bfa_1(t,\uone(t))\calE_{s,t},\quad \calE_{s,s}=I_{N_\two}.
\]
Suppose there are constants $v>0,m\geq 0$ and  $D_c\geq 1$ such that for any $t\geq v$ and $s\in [t-v,t]$,
\[
 D^{-1}_c I_{N_\two}\preceq\calE_{s,t} \calE_{s,t}^*\preceq D_c I_{N_\two},\quad \sigma_{\two,-}^2 I_{N_\two} \preceq\bfSigma_\two^* \bfSigma_\two\preceq \sigma_{\two,+}^2 I_{N_\two},
\]
\[
\bfA^*_1(t,\uone(t)) [\bfSigma_\one\bfSigma^*_\one]^{-1}\bfA_1(t,\uone(t))\preceq D_c (|\uone(t)|^{2m} +1)I_{N_\two}.
\]
Throughout this paper, for two real symmetric matrices $A$ and $B$, we use $A\preceq B$ to indicate that $B-A$ is a positive semi-definite matrix.

\end{aspt}
While $\bfA_1^* (\bfSigma_\one \bfSigma_\one^*)^{-1} \bfA_1$ actually concerns of observability, this bound is very mild. Thus, we still call \cref{aspt:control} the controllability condition.

\begin{prop}
\label{prop:control}
Suppose $N_\two\geq 2$, and the controllability condition, \cref{aspt:control} holds, then for any $t\geq v$,
 $\bfR_\two(t)\succeq h_{t,v}^{-1}(\uone) I_{N_\two}$, where
 \[
h_{t,v}(\uone):=v^2 \sigma^2_{\two, +} \sigma^{-2}_{\two, -}D^6_c\left(v+\int^t_{t-v} |\uone(r)|^{2m}dr\right) +v^{-1}D_c\sigma^{-2}_{\two, -}.
\]
In particular there are constants $D_1$ and $D_2$ such that
\[
\E\sqrt{\text{det}\bfR_\two(t)}^{-1}\leq D_1+D_2\int^t_{t-v} \E |\uone(r)|^{mN_\two}dr.
\]
\end{prop}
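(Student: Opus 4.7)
The target is equivalent to showing an upper bound on the information matrix $\mathbf{S}(t) := \bfR_\two(t)^{-1}$, namely $\mathbf{S}(t) \preceq h_{t,v}(\uone) I_{N_\two}$. First I would differentiate the identity $\mathbf{S}\bfR_\two = I_{N_\two}$ and substitute the Riccati equation from \eqref{CG_Result} to derive the ODE
\[
\dot{\mathbf{S}} = -\mathbf{a}_1^* \mathbf{S} - \mathbf{S}\mathbf{a}_1 + \mathbf{A}_1^*(\boldsymbol{\Sigma}_\one\boldsymbol{\Sigma}_\one^*)^{-1}\mathbf{A}_1 - \mathbf{S}\boldsymbol{\Sigma}_\two\boldsymbol{\Sigma}_\two^*\mathbf{S},
\]
which is again Riccati-type, now featuring a bounded source $\mathbf{A}_1^*(\boldsymbol{\Sigma}_\one\boldsymbol{\Sigma}_\one^*)^{-1}\mathbf{A}_1 \preceq D_c(|\uone|^{2m}+1) I_{N_\two}$ and a quadratic dissipation $-\mathbf{S}\boldsymbol{\Sigma}_\two\boldsymbol{\Sigma}_\two^*\mathbf{S}$ of strength at least $\sigma_{\two,-}^2$ by \cref{aspt:control}.

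The core step, following the Kalman--Bucy stability analysis surveyed in \cite{bishop2016stability}, is to control $\mathbf{S}(t)$ over the window $[t-v,t]$ via two complementary mechanisms. The dissipation term, combined with $\boldsymbol{\Sigma}_\two^*\boldsymbol{\Sigma}_\two \succeq \sigma_{\two,-}^2 I_{N_\two}$ and the lower propagator bound $\calE_{r,t}\calE_{r,t}^* \succeq D_c^{-1} I_{N_\two}$, makes the stochastic controllability Gramian $\int_{t-v}^t \calE_{r,t}\boldsymbol{\Sigma}_\two\boldsymbol{\Sigma}_\two^* \calE_{r,t}^*\,dr \succeq \sigma_{\two,-}^2 v D_c^{-1} I_{N_\two}$ forget any information inherited from $\mathbf{S}(t-v)$; the inverse of this lower bound delivers the second term $v^{-1}D_c\sigma_{\two,-}^{-2}$ of $h_{t,v}$. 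The observability source then accumulates through the flow, and combining the upper propagator bound $\calE_{r,t}\calE_{r,t}^* \preceq D_c I_{N_\two}$, the noise bound $\boldsymbol{\Sigma}_\two\boldsymbol{\Sigma}_\two^* \preceq \sigma_{\two,+}^2 I_{N_\two}$, and a Gronwall- plus Cauchy--Schwarz-type estimate produces the first term $v^2\sigma_{\two,+}^2\sigma_{\two,-}^{-2} D_c^6\bigl(v + \int_{t-v}^t |\uone(r)|^{2m}dr\bigr)$. Summing the two contributions yields $\mathbf{S}(t) \preceq h_{t,v}(\uone) I_{N_\two}$, hence the desired $\bfR_\two(t) \succeq h_{t,v}^{-1}(\uone) I_{N_\two}$.

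For the expectation estimate, $\bfR_\two(t) \succeq h_{t,v}^{-1}(\uone) I_{N_\two}$ forces $\det\bfR_\two(t)\geq h_{t,v}(\uone)^{-N_\two}$, so $\sqrt{\det\bfR_\two(t)}^{-1} \leq h_{t,v}(\uone)^{N_\two/2}$. I would then write $h_{t,v}(\uone) \leq C_1 + C_2\int_{t-v}^t |\uone(r)|^{2m}dr$ with constants $C_1, C_2$ depending only on $v,D_c,\sigma_{\two,\pm}$, apply the elementary inequality $(a+b)^{N_\two/2}\leq 2^{N_\two/2-1}(a^{N_\two/2}+b^{N_\two/2})$ (valid since $N_\two\geq 2$ implies $N_\two/2\geq 1$), and use Jensen's inequality in the form
\[
\Bigl(\int_{t-v}^t |\uone(r)|^{2m}\,dr\Bigr)^{N_\two/2} \leq v^{N_\two/2-1}\int_{t-v}^t |\uone(r)|^{mN_\two}\,dr.
\]
Taking expectation and using Fubini produces the claimed inequality $\E\sqrt{\det\bfR_\two(t)}^{-1}\leq D_1 + D_2\int_{t-v}^t \E|\uone(r)|^{mN_\two}dr$ with $D_1,D_2$ explicit in $C_1,C_2,v,N_\two$.

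The main obstacle is extracting the sharp constants — in particular the factors $D_c^6$, $v^2$, and $\sigma_{\two,+}^2$ — in the first part of $h_{t,v}$ from the matrix Riccati evolution. While the qualitative picture (forgetting-by-dissipation plus accumulated observability) is classical, the non-commutativity of $\mathbf{a}_1$, $\boldsymbol{\Sigma}_\two\boldsymbol{\Sigma}_\two^*$, and $\mathbf{A}_1^*(\boldsymbol{\Sigma}_\one\boldsymbol{\Sigma}_\one^*)^{-1}\mathbf{A}_1$ forces one to chain multiple propagator bounds through nested time integrals. A clean implementation is either to reduce to the scalar quantity $f(s) := \sup_{\|\eta\|=1}\eta^*\mathbf{S}(s)\eta$ and solve a scalar differential inequality of balance type, or to split $[t-v,t]$ into two subintervals, using controllability on one half to absorb the prior and observability on the other to bound the newly acquired information.
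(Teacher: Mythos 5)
Your overall framework is the right one and your treatment of the second claim (the bound on $\E\sqrt{\text{det}\bfR_\two(t)}^{-1}$) is complete and coincides with the paper's: $h_{t,v}^{N_\two/2}$, the power inequality $(a+b)^{N_\two/2}\leq 2^{N_\two/2}(a^{N_\two/2}+b^{N_\two/2})$, H\"older on the time integral, and Fubini. The gap is in the first claim, and it is exactly the step you flag as "the main obstacle": you assert that a "Gronwall- plus Cauchy--Schwarz-type estimate" produces the term $v^2\sigma_{\two,+}^2\sigma_{\two,-}^{-2}D_c^6(v+\int_{t-v}^t|\uone(r)|^{2m}dr)$, but you never derive it, and neither of your two fallback strategies closes it. The scalar reduction $f(s)=\sup_{\|\eta\|=1}\eta^*\mathbf{S}(s)\eta$ produces a differential inequality whose drift term involves $\|\bfa_1\|$ directly, and \cref{aspt:control} does not bound $\|\bfa_1\|$ --- it only bounds the propagator $\calE_{s,t}\calE_{s,t}^*$ between $D_c^{-1}I_{N_\two}$ and $D_cI_{N_\two}$ --- so that route stalls without an extra hypothesis. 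The interval-splitting idea likewise does not obviously yield the additive form (observability contribution) $+$ (inverse controllability Gramian) with the stated constants.

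The paper closes this step by quoting Theorem 4.4 of \cite{bishop2016stability}, which gives the explicit two-sided bound $[\calD_{s,t}+\calC_{s,t}^{-1}]^{-1}\preceq\bfR_\two(t)\preceq\calO_{s,t}^{-1}+\calF_{s,t}$, where $\calC_{s,t}=\int_s^t\calE_{r,t}\bfSigma_\two\bfSigma_\two^*\calE_{r,t}^*dr$ is the controllability Gramian and $\calD_{s,t}=\calC_{s,t}^{-1}[\int_s^t\calE_{r,t}\calC_{s,r}\bfA_1^*(\bfSigma_\one\bfSigma_\one^*)^{-1}\bfA_1\calC_{s,r}\calE_{r,t}^*dr]\calC_{s,t}^{-1}$ is an explicit matrix integral. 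Once this sandwich is in hand, the result is a short chain of the inequalities already listed in \cref{aspt:control}: $D_c^{-1}\sigma_{\two,-}^2(t-s)I_{N_\two}\preceq\calC_{s,t}\preceq D_c\sigma_{\two,+}^2(t-s)I_{N_\two}$ gives your second term $v^{-1}D_c\sigma_{\two,-}^{-2}$ from $\calC_{t-v,t}^{-1}$, and inserting the observability bound $\bfA_1^*(\bfSigma_\one\bfSigma_\one^*)^{-1}\bfA_1\preceq D_c(|\uone|^{2m}+1)I_{N_\two}$ together with the $\calC$ and $\calE$ bounds into the formula for $\calD_{s,t}$ accumulates precisely the factor $v^2\sigma_{\two,+}^2\sigma_{\two,-}^{-2}D_c^6$. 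Your dual formulation via $\mathbf{S}(t)=\bfR_\two(t)^{-1}$ is equivalent to the left half of this sandwich, so the fix is not a change of strategy but an appeal to (or a reproof of) that comparison theorem, which is where the additive structure and the polynomial-in-$D_c$ constants actually come from.
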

The dependence of $\bfR_\two(t)$ on $\uone( s)|_{t-v\leq s\leq t}$ comes from the observational term $\bfA_1$. As is seen from \eqref{CG_Result}, if $\bfA_1^* (\bfSigma_\one \bfSigma_\one^*)^{-1} \bfA_1$ is large, $\bfR_\two(t)$ has a large quadratic damping, which can bring it to a very low level.

In symmetry, an upper bound can be derived if  a lower bound of $\bfA_1^* (\bfSigma_\one \bfSigma_\one^*)^{-1} \bfA_1$ is assumed. Furthermore, one can show that the Riccati flow of $\bfR_\two(t)$ is contractive, so its dependence on $\bfR_\two(0)$ is diminishing. Since these results are not directly related to the performance of the hybrid estimator, we put them in  the appendix along with the verification of Proposition \ref{prop:control}.

\subsection{Long time performance}
The simulation of $(\uone^{i}(t), \utwo^{i}(t))$ can be maintained continuously, and  the conditional Gaussian density estimator \eqref{eqn:condGauss} can be applied for an online estimation. One important question to ask is whether the performance, and in particular the MISE, degenerates with time. If this is the case, additional samples are needed to reinforce the estimation, which is however usually difficult to carry out in practice. In this subsection, we show that the conditional Gaussian density estimator has a long time stable performance, as long as the joint process $(\uone,\utwo)$ is stable and ergodic.

In stochastic analysis, the stability and ergodicity of a process can be guaranteed by energy dissipation and non-degenerate stochastic forcing. For our purpose, we can assume the energy is dissipative, while the noise is elliptic \cite{majda2016ergodicity}.
\begin{aspt}
\label{aspt:ergodic}
Suppose $\bfSigma_\one$ and $\bfSigma_\two$ are full rank, and the energy is dissipative with a rate $\rho>0$ and a constant $D_e$
\begin{equation}
\label{eqn:dissipative}
\uone \cdot (\bfA_0+\bfA_1\utwo) + \utwo \cdot (\bfa_0+\bfa_1\utwo) \leq  -\rho(|\uone|^2+|\utwo|^2)+D_e.
\end{equation}
\end{aspt}

\begin{thm}
\label{thm:ergodic}
Under  \cref{aspt:ergodic}, the following hold.

\noindent 1) The joint density $p_t$ converges geometrically to an ergodic measure $p_\infty$ with a rate $c>0$. In particular, there is a constant $D_0$ so that
\begin{equation}
\label{tmp:Poincare}
\int \left|\frac{p_t}{p_\infty}(\uone,\utwo)-1\right|^2 p_\infty(\uone,\utwo)d\uone d\utwo\leq D_0 e^{-ct}\langle |\bfu|^2+1, p_0\rangle \left\|\frac{p_0}{p_\infty}-1\right\|_\infty^2.
\end{equation}
Here $\langle |\bfu|^2+1, p_0\rangle$ denotes the quantity $\int (|\uone|^2+|\utwo|^2+1) p_0(\uone,\utwo) d\uone d\utwo$, and $\|f\|_\infty$ denotes the supremum $\|f\|_\infty=\sup_{\uone,\utwo} |f(\uone,\utwo)|$.

\noindent 2) Suppose \cref{aspt:control} also holds,  then for any $t>0$ and $\delta>0$, $N_\two\geq 2$, the two parts of the MISE using the hybrid method are bounded by
\begin{align*}
\hat{p}_t\text{  Variance}&\leq \frac{D_{m,N_\two,v}}{L \pi^{\frac{N_\one+N_\two}{2}}H^{\frac{N_\one}{2}}\prod_{i=1}^{N_\one}c_i}\left(\exp(-\tfrac12\rho mN_\two t) \E |\bfu(0)|^{mN_\two}+D_{m,N_\two,v}\right),\\
\hat{p}_t\text{  Bias}\leq &\frac{(1+\delta)^2}{4} H^2 J\left( \sum_{i=1}^{N_\one} c_i^2\partial^2_{\bfu_{\one,i}^2} p_\infty(\uone,\utwo)\right)+\frac{(1+\delta)^2}{2\delta}  H^3\left(\sum_{i=1}^{N_\one}c_i^2\right)^3 J(M_\infty(\uone,\utwo))\\
&+8(1+\delta^{-1})D_0 e^{-ct}\langle |\bfu|^2+1, p_0\rangle \left\|\tfrac{p_0}{p_\infty}-1\right\|_\infty^2 \|p_\infty\|_\infty,
\end{align*}
where $D_{m,N_\two,v}$ is a constant independent of $L$ and $H$, and $M_\infty$ is a  bound for the third order $\uone$-directional derivative of $p_\infty$ as in \eqref{eqn:thirder}.
\end{thm}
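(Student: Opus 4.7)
For Part 1, I would use a Harris--Meyn--Tweedie style ergodicity argument. First, apply It\^o's formula to the Lyapunov function $V(\bfu)=|\uone|^2+|\utwo|^2+1$; the dissipation inequality \eqref{eqn:dissipative} together with standard bounds on the noise quadratic variation yield $\E V(\bfu(t))\le e^{-2\rho t}\E V(\bfu(0))+C$ for a constant depending on $D_e$, $\rho$, and the traces of $\bfSigma_\one\bfSigma_\one^*,\bfSigma_\two\bfSigma_\two^*$. Because $\bfSigma_\one$ and $\bfSigma_\two$ are full rank, the diffusion is uniformly elliptic on every sublevel set of $V$, which produces a Doeblin-type minorization there, and Harris' theorem then delivers existence and uniqueness of $p_\infty$ together with exponential decay of $p_t$ toward $p_\infty$ in a $V$-weighted total variation norm. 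To upgrade this to the $L^2(p_\infty)$ bound \eqref{tmp:Poincare}, I would combine three ingredients: (a) the $L^\infty$ preservation $\|p_t/p_\infty-1\|_\infty\le\|p_0/p_\infty-1\|_\infty$, obtained by representing $f_t-1$ as a conditional expectation under the time-reversed flow; (b) the $V$-weighted $L^1(p_\infty)$ decay $\int|f_t-1|p_\infty\,d\bfu\le De^{-ct}\langle V,p_0\rangle$ from Harris; and (c) the pointwise inequality $(f_t-1)^2 p_\infty\le\|f_t-1\|_\infty\cdot|f_t-1|p_\infty$, which after integration pairs the $L^\infty$ bound with the $L^1$ decay into the desired product structure.

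For the variance in Part 2, Theorem~\ref{thm:MISE} gives directly
\[
\hat{p}_t\ \mathrm{Variance}\le \frac{1}{L\,\pi^{(N_\one+N_\two)/2} H^{N_\one/2}\prod_{i=1}^{N_\one}c_i}\,\E\bigl(\det\bfR_\two(t)\bigr)^{-1/2},
\]
and Proposition~\ref{prop:control} bounds $\E(\det\bfR_\two(t))^{-1/2}\le D_1+D_2\int_{t-v}^t\E|\uone(r)|^{mN_\two}\,dr$. What remains is to estimate $\E|\uone(r)|^{mN_\two}$ uniformly in time: applying It\^o's formula to $|\bfu|^{mN_\two}$ together with \eqref{eqn:dissipative} (whose quadratic damping dominates the noise contribution at high moments) yields $\E|\bfu(r)|^{mN_\two}\le e^{-\rho mN_\two r/2}\,\E|\bfu(0)|^{mN_\two}+C_{m,N_\two}$, and substitution produces the claimed variance bound.

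For the bias, begin from the bound of Theorem~\ref{thm:MISE}, in which $p_t$ enters through $J(\sum c_i^2\partial^2_{\bfu_{\one,i}^2}p_t)$ and through the third-derivative majorant $M$. Writing $p_t=p_\infty+(p_t-p_\infty)$ and applying $(a+b)^2\le(1+\delta)a^2+(1+\delta^{-1})b^2$ to both $J$-quantities produces the two principal terms $\tfrac{(1+\delta)^2}{4}H^2 J(\sum c_i^2\partial^2_{\bfu_{\one,i}^2}p_\infty)$ and $\tfrac{(1+\delta)^2}{2\delta}H^3(\sum c_i^2)^3 J(M_\infty)$ with the stated coefficients (using $1+\delta^{-1}=(1+\delta)/\delta$), plus remainders involving $L^2$ norms of $\partial^2(p_t-p_\infty)$ and $\partial^3(p_t-p_\infty)$. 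These remainders feed into the exponentially small final term via Part 1: pointwise $|p_t-p_\infty|^2\le\|p_\infty\|_\infty(f_t-1)^2 p_\infty$, so integrating and invoking \eqref{tmp:Poincare} yields the factor $\|p_\infty\|_\infty D_0 e^{-ct}\langle|\bfu|^2+1,p_0\rangle\|p_0/p_\infty-1\|_\infty^2$; the derivative remainders are handled similarly, using interior elliptic regularity for the Fokker--Planck equation to transfer $L^2$ decay of $p_t-p_\infty$ to $L^2$ decay of its second and third derivatives at a fixed time cost that is absorbed into the constant $8$.

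The hardest step is Part 1, and specifically producing the exact product structure $\langle|\bfu|^2+1,p_0\rangle\cdot\|p_0/p_\infty-1\|_\infty^2$ in \eqref{tmp:Poincare}. A clean Poincar\'e spectral-gap argument is not directly available because the conditionally Gaussian system is not reversible in general, so one must interleave $L^\infty$ propagation (via time reversal or a maximum principle) with $V$-weighted Harris decay at the $L^1$ level and verify that the composition yields precisely this form. A secondary difficulty is controlling the $L^2$ norms of second and third derivatives of $p_t-p_\infty$ needed for the bias remainders, which requires either interior elliptic regularity applied to the Fokker--Planck equation or a separate coupled Lyapunov argument for the derivative process.
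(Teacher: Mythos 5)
Your Part 1 and the variance half of Part 2 follow essentially the paper's route: the same Lyapunov function $V=|\uone|^2+|\utwo|^2+1$ with $\mathcal{L}V\leq -\rho V+b\unit_{\mathcal{U}}$, full-rank noise giving a minorization on compact sets, Meyn--Tweedie/Harris for geometric $L^1$ convergence, and then \cref{prop:control} combined with dissipation of the moment $|\bfu|^{mN_\two}$ (generator bound, Dynkin, Gronwall) for the variance term. Two remarks there: the paper obtains \eqref{tmp:Poincare} by invoking Theorem 2.1 of \cite{bakry2008rate} rather than by interpolation, and your interpolation $(f_t-1)^2p_\infty\leq\|f_t-1\|_\infty\,|f_t-1|\,p_\infty$ paired with the $L^1$ decay yields only the \emph{first} power of $\|p_0/p_\infty-1\|_\infty$, not the square appearing in \eqref{tmp:Poincare}; neither form implies the other, so as written you have not derived the stated inequality.

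The genuine gap is in the bias term. You decompose $p_t=p_\infty+(p_t-p_\infty)$ \emph{inside} the $J$-functionals of \cref{thm:MISE}, which leaves you needing exponentially decaying, global-in-space $L^2$ bounds on $\partial^2_{\bfu_{\one,i}^2}(p_t-p_\infty)$ and on a third-derivative majorant of $p_t-p_\infty$. Inequality \eqref{tmp:Poincare} controls only the weighted $L^2$ norm of $p_t-p_\infty$ itself, the theorem's stated bound contains no derivative norms of $p_t-p_\infty$ and no regularity constants, and ``absorbing the fixed time cost into the constant $8$'' is not a step you can actually execute: exponential $L^2$ decay of third derivatives of $p_t-p_\infty$ is a substantially harder statement that the hypotheses do not obviously deliver. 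The paper avoids this entirely by decomposing the bias \emph{integrand} rather than the density: $|\bar p_t-p_t|\leq|p_\infty-\bar p_\infty|+|\bar p_\infty-\bar p_t|+|p_t-p_\infty|$, followed by the three-term Young inequality; the first piece is handled by \cref{lem:kernel} applied to $p_\infty$ (producing exactly the $(1+\delta)^2$ coefficients), the third by $\|p_\infty\|_\infty$ times \eqref{tmp:Poincare}, and the middle piece is reduced to the third via the observation that $\bar p_\infty-\bar p_t=K_H*(p_\infty-p_t)$ and convolution with the probability kernel $K_H$ is an $L^2$ contraction (Cauchy--Schwarz against $K_H$). That contraction step is the key idea your proposal is missing; with it, no derivatives of $p_t-p_\infty$ ever appear.
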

In particular, when $t\to \infty$, we have
\begin{align*}
\limsup_{t\to\infty}\text{MISE}&\leq \frac{D^2_{m,N_\two,v}}{L \pi^{\frac{N_\one+N_\two}{2}}H^{\frac{N_\one}{2}}\prod_{i=1}^{N_\one}c_i}+ \frac{(1+\delta)^2}{4} H^2 J\left( \sum_{i=1}^{N_\one} c_i^2\partial^2_{\bfu_{\one,i}^2} p_\infty(\uone,\utwo)\right)\\
&\quad+\frac{(1+\delta)^2}{2\delta}  H^3\left(\sum_{i=1}^{N_\one}c_i^2\right)^3 J(M_\infty(\uone,\utwo)).
\end{align*}
This leads to the same bandwidth and MISE scaling with $L$, namely:
\[
H\sim O\left(L^{-\frac{2}{4+N_\one}}\right)\quad \text{and}\quad \text{MISE}\sim O\left(L^{-\frac{4}{4+N_\one}}\right).
\]
The proof strategy of \cref{thm:ergodic} is straightforward. The first part is simply corollaries of \cite{meyn2012markov, mattingly2002ergodicity, bakry2008rate}. To reach a bound on the variance part in 2), it suffices to have a lower bound on $\E \sqrt{\text{det}\bfR_\two(t)}^{-1}$. This can be achieved by  \cref{prop:control} and an energy dissipation argument. For the bias term, we use the Poincar\'e inequality \eqref{tmp:Poincare} to approximate it with the bias term at equilibrium.

\subsection{Conditional Gaussian turbulent dynamical systems with energy-conserving \\quadratic nonlinearity}\label{sec:energyconservingmodel}
Recall the turbulence model $\bfu$ with quadratic energy conserving nonlinear interactions \eqref{EnergyConserveModel}--\eqref{EnergyConserving}
\[
d \bfu= -\Lambda \bfu dt+\bfB(\bfu,\bfu) dt+\bfF dt+\bfSigma d\bfW_t.
\]
The linear damping part provides a uniform dissipation, so for some $\lambda_->0$,
\[
\bfu \cdot \Lambda \bfu\geq \lambda_- |\bfu|^2,
\]
and the nonlinearity term $\bfB$ is quadratic and conserves energy.

In our conditional Gaussian setup, we can decompose the dynamics into the form below
\begin{equation}
\label{sys:SF}
\begin{gathered}
d\uone=(-\Lambda_{\one,0}\uone+\bfB_{\one,0} (\uone,\uone)+\bfF_\one)dt+(-\Lambda_{\one,1}+\bfB_{\one,1}(\uone))\utwo dt+\bfSigma_\one d\bfW_\one,\\
d\utwo=(-\Lambda_{\two,0}\uone+\bfB_{\two,0} (\uone,\uone)+\bfF_\two)dt+(-\Lambda_{\two,1}+\bfB_{\two,1}(\uone))\utwo dt+\bfSigma_\two d\bfW_\two.
\end{gathered}
\end{equation}
The quantities in the brackets naturally correspond to  $\bfA_0,\bfA_1,\bfa_0$ and  $\bfa_1$ respectively.

For the damping term $\Lambda$, we assume there are constants $0<\lambda_-\leq \lambda_+$,
\begin{equation}
\label{tmp:linear}
\lambda_- I_{N_\one+N_\two}\preceq\begin{bmatrix}\Lambda_{\one,0}  &\Lambda_{\one,1}\\ \Lambda_{\two,0} &\Lambda_{\two,1}\end{bmatrix}\preceq \lambda_+ I_{N_\one+N_\two}.
\end{equation}
The energy conservation condition, $\bfu\cdot \bfB(\bfu,\bfu)=0$, requires that
\begin{equation}
\label{tmp:quadratic}
\uone\cdot \bfB_{\one,0} (\uone,\uone)=0, \quad \utwo\cdot\bfB_{\two,1}(\uone)\utwo=0, \quad \uone\cdot\bfB_{\one,1}(\uone)\utwo+\utwo\cdot\bfB_{\two,0} (\uone,\uone)=0.
\end{equation}
See the Appendix of \cite{chen2017efficient} for details.
\begin{prop}
\label{prop:SF}
For the stochastic flow with energy conserving quadratic nonlinearity \eqref{sys:SF}, assume that \eqref{tmp:linear}  and \eqref{tmp:quadratic} hold,  and  $\bfSigma_\one$ and $\bfSigma_\two$ are of full rank. We have the following results:

\noindent 1).   \cref{aspt:ergodic} holds with $\rho=\frac12\lambda_-$ and $D_e=\frac{1}{2\lambda_-}(|\bfF_\one|^2+|\bfF_\two|^2).$\\
\noindent 2). \cref{aspt:control} holds with $v=1,m=1$ and
\[
D_c=\max\left\{ 1,  \frac{2\lambda_+\sigma^{-2}_{\two,-}}{1-\exp(-2\lambda_+)}, \frac{\sigma^2_{\two,+}}{2\lambda_-}, 2\lambda^2_+\sigma^{-2}_{\one,-},2\lambda^2_B \sigma^{-2}_{\one,-},\exp(2\lambda_+)\right\},
\]
where the constants are chosen such that  $|\bfB_{\two,1}(\uone)|\leq \lambda_B|\uone|$ and
\[
\sigma^{2}_{\one,-} I_{N_\one}\preceq \bfSigma_\one\bfSigma_\one^*,\quad \sigma^{2}_{\two,-} I_{N_\two}\preceq \bfSigma_\two\bfSigma_\two^*\preceq \sigma^{2}_{\two,+} I_{N_\two}.
\]

\end{prop}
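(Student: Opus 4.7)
The plan is to verify the two assumptions in turn, exploiting the energy-conservation identities \eqref{tmp:quadratic} to cancel nonlinear cross terms and the linear block bound \eqref{tmp:linear} to control the remainder.

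For Part 1, I would substitute the block decomposition of $\bfA_0,\bfA_1,\bfa_0,\bfa_1$ into $\uone\cdot(\bfA_0+\bfA_1\utwo)+\utwo\cdot(\bfa_0+\bfa_1\utwo)$ and expand. The three identities in \eqref{tmp:quadratic} annihilate every nonlinear contribution, namely $\uone\cdot\bfB_{\one,0}(\uone,\uone)=0$, $\utwo\cdot\bfB_{\two,1}(\uone)\utwo=0$, and the cross pair $\uone\cdot\bfB_{\one,1}(\uone)\utwo+\utwo\cdot\bfB_{\two,0}(\uone,\uone)=0$. What survives is the linear expression $-\bfu\cdot\Lambda\bfu+\bfu\cdot\bfF$, where $\bfu=(\uone,\utwo)$ and $\bfF=(\bfF_\one,\bfF_\two)$. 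The lower bound in \eqref{tmp:linear} gives $\bfu\cdot\Lambda\bfu\geq\lambda_-|\bfu|^2$, and a Young split $|\bfu\cdot\bfF|\leq\tfrac12\lambda_-|\bfu|^2+\tfrac{1}{2\lambda_-}|\bfF|^2$ produces exactly $\rho=\tfrac12\lambda_-$ and $D_e=\tfrac{1}{2\lambda_-}(|\bfF_\one|^2+|\bfF_\two|^2)$.

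For Part 2, the $\bfSigma_\two$ sandwich bound is immediate from the full-rank assumption. For the flow $\calE_{s,t}$, the key is that $\utwo\cdot\bfB_{\two,1}(\uone)\utwo\equiv 0$ forces $\bfB_{\two,1}(\uone)$ to be skew-symmetric, so $\bfa_1+\bfa_1^*=-(\Lambda_{\two,1}+\Lambda_{\two,1}^*)$ is actually deterministic; evaluating \eqref{tmp:linear} on vectors $(0,\bfv)$ gives $2\lambda_- I\preceq \Lambda_{\two,1}+\Lambda_{\two,1}^*\preceq 2\lambda_+I$. Differentiating $|\calE_{s,t}\bfv|^2$ and applying Gronwall then yields $e^{-2\lambda_+(t-s)}|\bfv|^2\leq|\calE_{s,t}\bfv|^2\leq e^{-2\lambda_-(t-s)}|\bfv|^2$, which on the window $s\in[t-1,t]$ requires $D_c\geq\exp(2\lambda_+)$. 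For the observability bound, decompose $\bfA_1^*\bfA_1\preceq 2\Lambda_{\one,1}^*\Lambda_{\one,1}+2\bfB_{\one,1}(\uone)^*\bfB_{\one,1}(\uone)$; linearity of $\bfB_{\one,1}$ in $\uone$ gives $\bfB_{\one,1}(\uone)^*\bfB_{\one,1}(\uone)\preceq\lambda_B^2|\uone|^2 I$ (with $\lambda_B$ chosen to cover both $\bfB_{\one,1}$ and $\bfB_{\two,1}$), and reading \eqref{tmp:linear} as a spectral-norm bound gives $\Lambda_{\one,1}^*\Lambda_{\one,1}\preceq\lambda_+^2 I$. Multiplying by $(\bfSigma_\one\bfSigma_\one^*)^{-1}\preceq\sigma_{\one,-}^{-2}I$ yields
\[
\bfA_1^*(\bfSigma_\one\bfSigma_\one^*)^{-1}\bfA_1\preceq 2\sigma_{\one,-}^{-2}(\lambda_+^2+\lambda_B^2|\uone|^2)I\preceq D_c(1+|\uone|^2)I,
\]
provided $D_c\geq\max\{2\lambda_+^2\sigma_{\one,-}^{-2},2\lambda_B^2\sigma_{\one,-}^{-2}\}$, which also fixes $m=1$. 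The remaining constants $\sigma_{\two,+}^2/(2\lambda_-)$ and $2\lambda_+\sigma_{\two,-}^{-2}/(1-e^{-2\lambda_+})$ in the displayed maximum enter when these bounds are fed into the moment and Riccati estimates behind \cref{prop:control}; taking the maximum together with the trivial $D_c\geq 1$ matches the stated formula.

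The main obstacle is accounting rather than analysis: each of the six candidate values of $D_c$ traces back to a distinct estimate (flow contraction, quadratic growth of $\bfB_{\one,1}$, spectral norm of $\Lambda_{\one,1}$, and the two auxiliary bounds used in \cref{prop:control}), and one must track them consistently. A secondary subtlety is treating \eqref{tmp:linear} as a spectral-norm bound on the non-symmetric block $\Lambda_{\one,1}$; this is either implicit in the assumption or absorbed into $\lambda_+$, and no deeper difficulty arises beyond Gronwall, Young's inequality, and the three energy-conservation identities.
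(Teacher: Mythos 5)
Your proposal is correct and follows essentially the same route as the paper: Part 1 by cancelling the nonlinear terms via \eqref{tmp:quadratic} and applying Young's inequality, and Part 2 by using $\utwo\cdot\bfB_{\two,1}(\uone)\utwo=0$ to reduce the flow $\calE_{s,t}$ to the deterministic dissipation of $\Lambda_{\two,1}$ (giving the $\exp(\pm 2\lambda_\pm(t-s))$ spectral bounds) and bounding $\bfA_1^*(\bfSigma_\one\bfSigma_\one^*)^{-1}\bfA_1$ by $2(\lambda_+^2+\lambda_B^2|\uone|^2)\sigma_{\one,-}^{-2}$. The only difference is cosmetic: the paper also carries out the controllability Gramian computations explicitly (yielding the constants $\sigma_{\two,+}^2/(2\lambda_-)$ and $2\lambda_+\sigma_{\two,-}^{-2}/(1-e^{-2\lambda_+})$ inside the proof of \cref{prop:SF} rather than deferring them to \cref{prop:control}), and it shares the same mild looseness you flag in extracting $\|\Lambda_{\one,1}\|\leq\lambda_+$ from \eqref{tmp:linear}.
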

The proof of \cref{prop:SF} is shown in \ref{Sec:Proof_Stochastic_Model}. The energy conservation property plays an essential role in verifying the system stability, and

\section{Proofs}\label{Sec:Proofs}

\subsection{Finite time MISE}
\begin{proof}[Proof of \cref{thm:MISE}]
Denote the one sample path density function:
\[
\hat{p}_i(\uone,\utwo):=K_H (\uone- \uone^i(t))p(\utwo|\uone^i(s\leq t)),
\]
such that the recovered PDF is given by $\hat{p}_t(x, y)=\frac1L \sum_{i=1}^L \hat{p}_i(x,y)$. Consider its average
\[
\bar{p}_t(\uone,\utwo)=\E K_H (\uone- \uone(t))p(\utwo|\uone^i(s\leq t))=\E \hat{p}_t(\uone,\utwo).
\]
The true density can be written as $p_t(\uone,\utwo)=\E \delta_{\uone^i(t)}(\uone)p(\utwo|\uone^i(s\leq t))$, since for any test function $f$, the following holds
\begin{align*}
\int p_t&(\uone,\utwo)f(\uone,\utwo)d\uone d\utwo=\E f(\uone^i(t), \utwo^i(t))\\
&=\E \E(f(\uone^i(t), \utwo^i(t))| \uone^i(s\leq t))=\E \int f(\uone,\utwo)\delta_{\uone^i(t)}(\uone)p(\utwo|\uone^i(s\leq t))d\uone d\utwo.
\end{align*}
This gives the following result
\begin{align*}
\bar{p}_t(\uone,\utwo)&= \E K_H(\uone- \uone(t))p(\utwo|\uone^i(s\leq t))\\
&=\E \int d\uone' K_H (\uone- \uone') \delta_{\uone^{i}(t)}(\uone') p(\utwo|\uone^i(s\leq t))\\
&= \int d\uone' K_H (\uone- \uone') p_t(\uone',\utwo)=:K_H* p_t (\uone,\utwo),
\end{align*}
where $*$ denotes the convolution. The Variance-Bias decomposition of the MISE can be made:
\begin{align}
\notag
&\E \int |\hat{p}_t(\uone,\utwo)-p_t(\uone, \utwo)|^2 d\uone d\utwo\\
\notag
&=  \int \E |\hat{p}_t(\uone,\utwo)-\bar{p}_t(\uone,\utwo)|^2d\uone d\utwo+\int |\bar{p}_t(\uone,\utwo)-p_t(\uone, \utwo)|^2 d\uone d\utwo\\
\label{eqn:MISEdecomp}
&= \int \text{var}\,\, \hat{p}_t(\uone, \utwo) d\uone d\utwo+ \int |\bar{p}_t(\uone,\utwo)-p_t(\uone,\utwo)|^2 d\uone d\utwo.
\end{align}
Since $\bar{p}_t=p_t*K_H$, so
\[
|\bar{p}_t(\uone,\utwo)-p_t(\uone,\utwo)|=\left|\int K_H(\uone-\uone')(p_t(\uone',\utwo)-p_t(\uone,\utwo)) d\uone'\right|.
\]
In \cref{lem:kernel}, a Taylor expansion on $(p_t(\uone',\utwo)-p_t(\uone,\utwo))$ leads to the following upper bound for the bias part:
\[
 \frac{1+\delta}{4} H^2 J\left( \sum_{i=1}^{N_\one} c_i^2\partial^2_{\bfu_{\one,i}^2} p_t(\uone,\utwo)\right)+\frac{1+\delta^{-1}}2 M^2 H^3\left(\sum_{i=1}^{N_\one}c_i^2\right)^3 J(M(\uone,\utwo)),\quad \forall \delta>0.
\]
Moreover, in light of the relation $\hat{p}_t(\uone, \utwo)=\frac1L \sum_{i=1}^L \hat{p}_i(\uone,\utwo)$ and the independence of the density samples $\hat{p}_i$, we have
\begin{align}
\notag
\int \text{var}\,\, \hat{p}_t(\uone, \utwo) d\uone d\utwo&=\frac1L \int \text{var} \,\,\hat{p}_i(\uone,\utwo) d\uone d\utwo\\
\notag
&\leq \frac1L \int \E |\hat{p}_i(\uone,\utwo)|^2 d\uone d\utwo=\frac1L  \E\int  |\hat{p}_i(\uone,\utwo)|^2 d\uone d\utwo.
\end{align}
Note that each $\hat{p}_i(x,y)$ is a Gaussian density with mean $(\uone^{i}(t), \barut(t))$ and a block diagonal covariance, where the blocks are given by $H \bfC$ and $\bfR_\two(t)$, respectively. In \cref{lem:L2Gaussian}, a straightforward computation of the $L^2$ norm of a Gaussian density shows that
\[
\int |\hat{p}_i(\uone,\utwo)|^2 d\uone d\utwo=\frac{1}{\sqrt{\prod_{i=1}^{N_\one} (\pi Hc_i^2) \text{det}(\pi \bfR_\two(t))} } .
\]
This leads to the bound of the MISE.
\end{proof}

\begin{proof}[Proof of  \cref{prop:marginal}]
Denote $\hat{p}_i(\utwo)=p(\utwo| \uone^{i}(s\leq t))$, then following the same proof as in \cref{thm:MISE}, we have $p_t(\utwo)=\E \hat{p}_i (\utwo)$ and $\hat{p}_t(\utwo)=\frac1L \sum_{i=1}^L \hat{p}_i(\utwo)$. Thus,
\begin{align*}
\int  |p_t(\utwo)-\hat{p}_t(\utwo)|^2 d\utwo=\int \text{var} \,\,&\hat{p}_t(\utwo) d\utwo=\frac1L \int \text{var} \,\,\hat{p}_i(\utwo) d\utwo\\
&\leq \frac1L \int \E |\hat{p}_i(\utwo)|^2 d\utwo= \frac1L \E \frac{1}{\sqrt{\text{det}(\pi \bfR_\two(t))}}.
\end{align*}
\end{proof}

\begin{proof}[Proof of  \cref{cor:marginal}]
The proof is identical to the one of  \cref{thm:MISE}, as long as one replaces the densities involving $\utwo$ to the version for $\utwo^P$. Therefore it is omitted here.
\end{proof}

\subsection{Long time result}

\begin{proof}[Proof of \cref{thm:ergodic}]
Part 1): The geometric ergodicity, i.e. the following $L^1$ convergence,
\[
\int |p_t(\uone,\utwo)-p_\infty(\uone,\utwo)| d\uone d\utwo\leq D_0 e^{-ct} \langle |\bfu|^2+1, p_0\rangle,
\]
 is a direct result that comes from the framework of \cite{meyn2012markov, mattingly2002ergodicity}. Its equivalence to the Poincar\'e type of inequality \eqref{tmp:Poincare} is a result by \cite{bakry2008rate}. We will try to verify the conditions needed in \cite{bakry2008rate}.

We claim that $V(\uone,\utwo)=|\uone|^2+|\utwo|^2+1$ is a Lyapunov function of Definition 1.1 in \cite{bakry2008rate}. Apply the generator $\mathcal{L}$ of the diffusion process
\begin{equation*}
\begin{split}
\mathcal{L} V&= 2\uone  \cdot (\bfA_0+\bfA_1\utwo) +2 \utwo\cdot (\bfa_0+\bfa_1\utwo)+\text{tr}(\bfSigma_\one \bfSigma_\one^*+\bfSigma_\two \bfSigma_\two^*)\\
&\leq -2\rho V+(2\rho+2D_e+\text{tr}(\bfSigma_\one \bfSigma_\one^*+\bfSigma_\two \bfSigma_\two^*))\leq -\rho V+b \unit_{\mathcal{U}},
\end{split}
\end{equation*}
where $b=2\rho+2D_e+\text{tr}(\bfSigma_\one \bfSigma_\one^*+\bfSigma_\two \bfSigma_\two^*)$, and $\mathcal{U}=\{V(\uone,\utwo)\leq b\}$.
The fact that $\mathcal{U}$, and actually any compact subset, is a petite set can be verified by the same proof of Lemma 3.4 in \cite{mattingly2002ergodicity}, since we assume $\bfSigma_\one$ and $\bfSigma_\two$ are full rank. The fact the stochastic process is irreducible can  also be verified using the same argument. More details  on these arguments  are provided in \cite{majda2016ergodicity} for more general conditions.

Therefore, applying theorem 1.2 of \cite{bakry2008rate}  leads to the $L^1$ convergence above. Theorem 2.1 also applies with $f(\uone,\utwo)=\frac{p_0}{p_\infty}(\uone,\utwo)$, which gives \eqref{tmp:Poincare}.
\newline

\noindent Part 2): We again decompose the MISE into  \eqref{eqn:MISEdecomp}.
\[
\text{MISE}=\int \text{var}\,\, \hat{p}_t(\uone, \utwo) d\uone d\utwo+ \int |\bar{p}_t(\uone,\utwo)-p_t(\uone,\utwo)|^2 d\uone d\utwo.
\]
Following the proof of  \cref{thm:MISE}, we have the variance part
\[
\int \text{var}\hat{p}_t(\uone,\utwo) d\uone d\utwo\leq  \E \frac{1}{L\sqrt{\prod_{i=1}^{N_\one} (\pi Hc_i^2) \text{det}(\pi \bfR_\two(t))} } .
\]
\cref{prop:control} leads to $\E \frac{1}{\sqrt{\text{det}(\bfR_\two(t))}}\leq D_1+D_2\int^t_{t-v} \E |\uone(r)|^{mN_\two}dr.$ To provide a bound for $\E |\uone(t)|^{mN_\two}$,  we verify that any fixed moment $|\bfu|^{2n}=(|\uone|^2+|\utwo|^2)^n$ is also dissipative. Applying the generator of the diffusion process yields
\begin{align*}
&\mathcal{L}|\bfu(t)|^{2n}=2n|\bfu|^{2(n-1)}( \uone\cdot (\bfA_0+\bfA_1\utwo) +\utwo\cdot(\bfa_0+\bfa_1\utwo))\\
&\qquad\qquad\qquad+n\text{tr}(\bfSigma^*(|\bfu|^{2(n-1)}I+2(n-1)|\bfu|^{2(n-2)}\bfu \bfu^*)\bfSigma)\\
&\leq -2n \rho|\bfu|^{2n}+2n D_e |\bfu|^{2(n-1)}+2n^2 \text{tr}(\bfSigma_\one \bfSigma_\one^*+\bfSigma_\two \bfSigma_\two^*))|\bfu|^{2(n-1)}\leq -n\rho |\bfu|^{2n}+D_{n,\bfSigma},
\end{align*}
where $\bfSigma=[\bfSigma_\one^*,\bfSigma_\two^*]^*$ and the constant $D_{n,\bfSigma}$ exists because of Young's inequality.

Apply Dynkin's formula for $e^{\rho nt} |\bfu(t)|^{2n}$, and combine it with the result above,  we have the following Gronswall's inequality
\begin{equation}
\label{tmp:highmonent}
\E |\bfu(t)|^{2n}\leq  e^{-\rho nt}\E |\bfu(0)|^{2n}+\frac{D_{n,\bfSigma}}{n\rho}.
\end{equation}
To continue, we let $n= mN_\two/2$ in \eqref{tmp:highmonent} and integrate it in time range $[t-v,t]$,
\[
\E \int^t_{t-v} |\uone(s)|^{mN_\two} ds\leq v\exp(-\tfrac12\rho mN_\two (t-v))\E |\bfu(0)|^{mN_\two}+\frac{2vD_{mN_\two/2,\bfSigma}}{mN_\two\rho}.
\]
Consequently, there exists a constant  $D_{m,N_\two,v}$ such that
\[
\int \text{var}\hat{p}_t(\uone,\utwo) d\uone d\utwo\leq \frac{D_{m,N_\two,v}}{L \pi^{\frac{N_\one+N_\two}{2}}H^{\frac{N_\one}{2}}\prod_{i=1}^{N_\one} c_i}\left(\exp(-\tfrac12\rho mN_\two t) \E |\bfu(0)|^{mN_\two}+D_{m,N_\two,v}\right).
\]
For the bias term, $\int |\bar{p}_t(\uone,\utwo)-p_t(\uone,\utwo)|^2 d\uone d\utwo$,  we use the Cauchy Schwartz
\[
(a+b+c)^2\leq \left(\frac{1}{1+\delta}+\frac{\delta}{2(1+\delta)}+\frac{\delta}{2(1+\delta)}\right)\left( (1+\delta) a^2+2(1+\delta^{-1})b^2+2(1+\delta^{-1})c^2\right),
\]
with
\[
a=|p_t(\uone,\utwo)-p_\infty(\uone,\utwo)|, ~ b=|\bar{p}_t(\uone,\utwo)-\bar{p}_\infty(\uone,\utwo)|,~ c=|p_\infty(\uone,\utwo)-\bar{p}_\infty(\uone,\utwo)|.
\]
Recall that $\bar{p}_\infty=K_H*p_\infty$. Using the same proof as in \cref{thm:MISE}, we have
\begin{multline*}
\int |p_\infty(\uone,\utwo)-\bar{p}_\infty(\uone,\utwo)|^2 d\uone d\utwo \\\leq \frac{1+\delta}{4} H^2 R\left( \sum_{i=1}^{N_\one} c_i^2\partial^2_{\bfu_{\one,i}^2} p_\infty(\uone,\utwo)\right)+\frac{1+\delta^{-1}}2  H^3\left(\sum_{i=1}^{N_\one}c_i^2\right)^3 .
\end{multline*}
Then apply \eqref{tmp:Poincare}, we have
\begin{align*}
\int |p_t(\uone,\utwo)&-p_\infty(\uone,\utwo)|^2d \uone d \utwo\\
&\leq \|p_\infty\|_\infty \int |p_t(\uone,\utwo)-p_\infty(\uone,\utwo)|^2\frac{1}{p_\infty(\uone,\utwo)}d \uone d \utwo\\
&\leq D_0 e^{-ct}\langle |\bfu|^2+1, p_0\rangle \left\|\tfrac{p_0}{p_\infty}-1\right\|_\infty^2 \|p_\infty\|_\infty.
\end{align*}
Next, recall that $\bar{p}_t(\uone,\utwo)=\int K_H(\uone') p_t(\uone-\uone',\utwo) d\uone'$. Therefore, by Cauchy Schwartz
\begin{align*}
|\bar{p}_t(\uone,\utwo)&-\bar{p}_\infty(\uone,\utwo)|^2 \\
&=\left(\int K_H(\uone') (p_t(\uone-\uone',\utwo)-p_\infty(\uone-\uone',\utwo)) d\uone'\right)^2 \\
&\leq \int K_H(\uone') d\uone' \int (p_t(\uone-\uone',\utwo)-p_\infty(\uone-\uone',\utwo))^2 K_H(\uone') d\uone'\\
&\leq \int (p_t(\uone-\uone',\utwo)-p_\infty(\uone-\uone',\utwo))^2 K_H(\uone') d\uone'.
\end{align*}
Consequently,
\begin{align*}
\int |\bar{p}_t(\uone,\utwo)&-\bar{p}_\infty(\uone,\utwo)|^2 d\uone d\utwo\\
&\leq \int (p_t(\uone-\uone',\utwo)-p_\infty(\uone-\uone',\utwo))^2 K_H(\uone') d\uone'd\uone d\utwo\\
&=\int \left(\int (p_t(\uone-\uone',\utwo)-p_\infty(\uone-\uone',\utwo))^2  d\uone d\utwo\right)K_H(\uone-\uone')d\uone'\\
&=\int |p_t(\uone,\utwo)-p_\infty(\uone,\utwo)|^2d \uone d \utwo.
\end{align*}
Combining the results finishes the proof.
\end{proof}

\section{Numerical examples}\label{Sec:Numerics}
Below, numerical examples are used to support the theoretical results in \cref{Sec:Theorems}. The test model considered here is the following \emph{triad model} \cite{majda2012physics},
\begin{subequations}\label{TriadModel}
\begin{align}
  \frac{du_1}{dt} &= A_1u_2u_3,\label{TriadModel_u1}\\
  \frac{du_2}{dt} &= A_2u_3u_1 - d_2u_2 + \sigma_2\dot{W}_2,\label{TriadModel_u2}\\
  \frac{du_3}{dt} &= A_3u_1u_2 - d_3u_3 + \sigma_3\dot{W}_3,\label{TriadModel_u3}
\end{align}
\end{subequations}
where $A_1+A_2+A_3=0$ represents the energy-conserving nonlinear interactions and $d_2>0, d_3>0$ are the damping terms. Note that there is no damping and dissipation in \eqref{TriadModel_u1} but \eqref{TriadModel} is a hypoelliptic diffusion \cite{majda2016ergodicity, mattingly2002ergodicity}. Linear stability is satisfied for $u_2, u_3$ while there is only neutral stability of $u_1$. Define $E_2 = \sigma_2^2/(2d_2)$ and $E_3 = \sigma_3^2/(2d_3)$. It is straightforward to show that the triad system \eqref{TriadModel} has a Gaussian invariant measure \cite{majda2002priori, majda2012physics}
\begin{equation*}
  p_{eq}(u) = C\exp\left(-\frac{1}{2}\left(\frac{u_1^2}{E_1} +\frac{u_2^2}{E_2} +\frac{u_3^2}{E_3} \right)\right),
\end{equation*}
provided that the following condition is satisfied
\begin{equation}\label{Triad_Stable_Condition}
  E_1 = -A_1 E_2 E_3 (A_2 E_3 + A_3 E_2)^{-1} > 0.
\end{equation}
If the condition in \eqref{Triad_Stable_Condition} is violated, namely $E_1<0$, then the variance in $u_1$ direction will increase unboundedly and there is no invariant measure for the triad system \eqref{TriadModel}.

Below, two dynamical regimes of the triad model \eqref{TriadModel} are studied, where the corresponding parameters are listed in the \cref{Table:Regimes}. Particularly, the triad system \eqref{TriadModel} in Regime I has a Gaussian invariant measure while there is no invariant measure in Regime II due to the fact that $E_1<0$. See Figure \ref{fig:Regimes} for the time evolution of the three marginal variances and one realization of each variable and \cite{majda2016introduction} for dynamical introduction about such triad models.

\begin{table}[tbhp]
\caption{Parameters of two dynamical regimes of the triad model \eqref{TriadModel}}
\label{Table:Regimes}
\centering
\begin{tabular}{|l|c|c|c|c|c|c|c||c||c|c|c|c|}
  \hline
            & $A_1$  & $A_2$ & $A_3$ & $d_2$ & $d_3$ & $\sigma_2$ & $\sigma_3$  &                   & $E_2$ & $E_3$ & $E_1$     & $\mbox{Var}(u_1)$\\
  Regime I  & $-2.5$ & $1$   & $1.5$ & $1$   & $0.5$ & $1$        & $1$         & $\Longrightarrow$ & $0.5$  & $1$  & $5/11$  & Bounded\\
  Regime II & $-0.5$ & $-1$  & $1.5$ & $1$   & $0.5$ & $1$        & $1$         &                   & $0.5$  & $1$  & $-5/3$  & Unbounded\\
  \hline
\end{tabular}
\end{table}

Denote $\mathbf{u}_\mathbf{I} = (u_2,u_3)^T$ and $\mathbf{u}_\two = u_1$. The triad system \eqref{TriadModel} belongs to the conditional Gaussian family \eqref{Conditional_Gaussian_System}. Notably, the noise coefficient in $\mathbf{u}_\two$ is $\boldsymbol\Sigma_\two=0$, which implies the system has no controllability.  The initial values in the tests below are all given at origin. Here only the hybrid method \eqref{Joint0} is tested and the number of samples is always $L=500$.

\cref{fig:RegimeI_t1} shows the recovered PDF at $t=1$ in Regime I of the triad model. Despite an accurate estimation of the joint PDF of the observed variables $p(u_2,u_3)$ as shown in Panel (e), the recovered PDF of the unobserved variable $u_1$ in Panel (f) has quite a few noisy fluctuations and the recovered joint PDFs $p(u_1,u_2)$ and $p(u_3,u_1)$ in Panel (d) and (f) are non-smooth in $u_1$ direction as well. Such  pathological behavior results from the loss of controllability of the system, which is consistent with the theoretical discussion in \cref{sec:lowerbound}. In fact, the term $\mathbf{a}_1$ in \eqref{Conditional_Gaussian_System} associated with the triad system \eqref{TriadModel} is zero. Therefore, according to \eqref{CG_Result}, $\boldsymbol\Sigma_\two=0$ implies the posterior variance $\mathbf{R}_\two=\mathbf{0}$ and the posterior mean $\mathbf{\bar{u}}_\two$ simply follows the sampled trajectory of $\mathbf{u}_\two$. In other words, the posterior states from the algorithm are exactly the Monte Carlo samples, as is validated in Panel (h). The same performance is found in Regime II and thus we omit the figure here.

In order to make the triad system have controllability, a small noise is added to \eqref{TriadModel_u1} and the resulting \emph{modified triad system} is given as follows,
\begin{subequations}\label{TriadModel_New}
\begin{align}
  \frac{du_1}{dt} &= A_1u_2u_3 + \epsilon\dot{W}_1,\label{TriadModel_u1_New}\\
  \frac{du_2}{dt} &= A_2u_3u_1 - d_2u_2 + \sigma_2\dot{W}_2,\label{TriadModel_u2_New}\\
  \frac{du_3}{dt} &= A_3u_1u_2 - d_3u_3 + \sigma_3\dot{W}_3,\label{TriadModel_u3_New}
\end{align}
\end{subequations}
where $\epsilon$ is the noise coefficient of $u_1$ with $\boldsymbol\Sigma_\two=\epsilon$ in \eqref{Conditional_Gaussian_System}. Below we set $\epsilon=0.1\ll \sigma_2=\sigma_3=1$. The other parameters in \eqref{TriadModel_New} remain the same as those in \cref{Table:Regimes}.

This extra noise implies the triad system is controllable, which significantly improves the accuracy of the recovered PDFs. See \cref{fig:RegimeI_t1_Noise01} for the results in Regime I at $t=1$. In particular, Panel (h) of \cref{fig:RegimeI_t1_Noise01} shows that the posterior means are quite different from the Monte Carlo samples and the posterior variances are no longer zero. It is also shown in \cref{fig:RegimeI_t20_Noise01} that the recovered PDFs at a long time $t=20$ (i.e., statistically steady state) are very close to the truth with this extra small noise.

Similarly, \cref{fig:RegimeII_t1_Noise01} shows the recovered PDFs of Regime II with $\epsilon=0.1$ at $t=1$, the error in which compared with the truth is negligible. Notably, although the amplitude of $u_1$ has an unbounded growth in this regime due to the fact that $E_1<0$, the recovered PDFs with $\epsilon=0.1$ at $t=20$ as illustrated in \cref{fig:RegimeII_t20_Noise01} remain quite accurate. Next, the performance of the hybrid algorithm at a very long time in this regime is studied. \cref{fig:RegimeII_t400_Noise01} shows the recovered PDFs at $t=400$. Similar to \cref{fig:RegimeI_t1}, the noisy fluctuations are found in the recovered PDF of $u_1$. In fact, direct calculations show  that the posterior variance $\mathbf{R}_\two$ in \eqref{CG_Result} is bounded from above since the unbounded signal $u_1$ does not enter into the evolution of $\mathbf{R}_\two$, which is also validated by the numerical simulation in Panel (h). Since the variance of $u_1$ increases with time, the percentage of the portion covered by each conditional Gaussian distribution decreases in time, which reduces the skill in the recovered PDFs by the conditional Gaussian mixtures.
In \cref{fig:RegimeII_damping_t50}, we show that by further imposing a damping in the dynamics of $u_1$ of the modified triad model \eqref{TriadModel_New} in Regime II, the model then satisfies all the conditions in \cref{prop:SF} and the resulting  model has an invariant measure. In such a scenario, the hybrid algorithm is skillful in both short and long time as is affirmed by \cref{prop:SF}. 

It is also worthwhile pointing out that all the test models in \cite{chen2017efficient}, including the noisy version of Lorenz 63 model \cite{lorenz1963deterministic}, the stochastic climate model \cite{majda2008applied, majda2005information}, the nonlinear triad model mimicking structural features of low-frequency variability of GCMs with non-Gaussian features \cite{majda2009normal} and the modified conceptual dynamical model for turbulence \cite{majda2014conceptual}, all satisfy the conditions in \cref{prop:SF}. Therefore, the hybrid algorithm \eqref{Joint0} is able to solve the PDFs of those models with high accuracy with only a small number of samples.

\begin{figure}
  \centering\label{fig:Regimes}
  \hspace*{-2cm}\includegraphics[width=17.5cm]{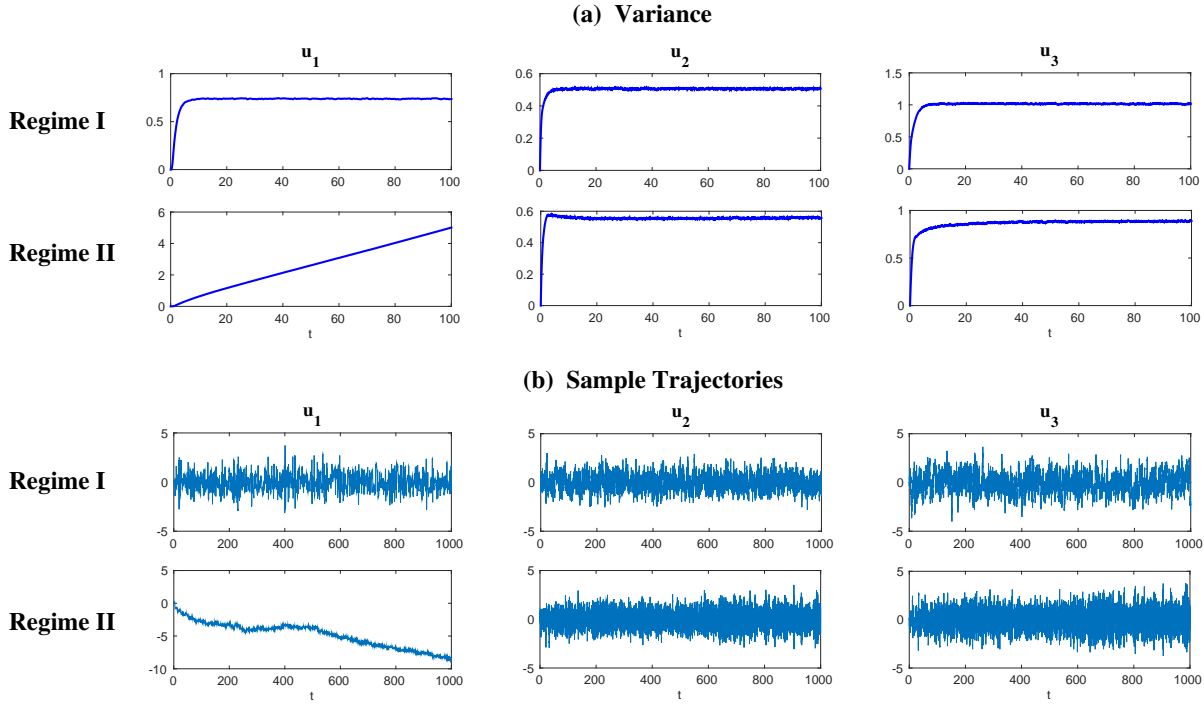}
  \caption{Triad model \eqref{TriadModel}. (a) Marginal variance as a function of time ($t\in[0,100]$) in the two dynamical regimes with parameters in \cref{Table:Regimes}. (b) Sample trajectories up to $t=1000$ of the two dynamical regimes. Note the unbounded growth of the amplitude of $u_1$ in Regime II.}
\end{figure}

\begin{figure}
  \centering\label{fig:RegimeI_t1}
  \hspace*{-2cm}\includegraphics[width=17.5cm]{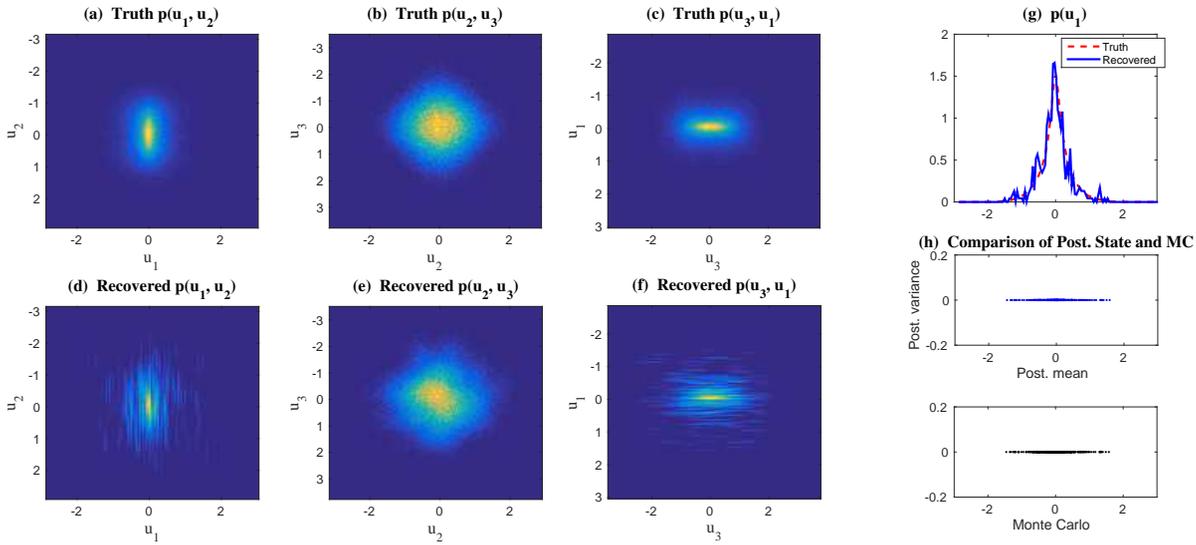}
  \caption{Triad model \eqref{TriadModel}, Regime I at $t=1$. (a)-(c) True 2D PDF. (d)-(f) Recovered PDF. (g) True and recovered 1D PDF $p(u_1)$. (h) Top: Posterior mean (x-axis) and posterior variance (y-axis). Bottom: Monte Carlo samples.  The total number of samples is $L=500$.}
\end{figure}

\begin{figure}
  \centering\label{fig:RegimeI_t1_Noise01}
  \hspace*{-2cm}\includegraphics[width=17.5cm]{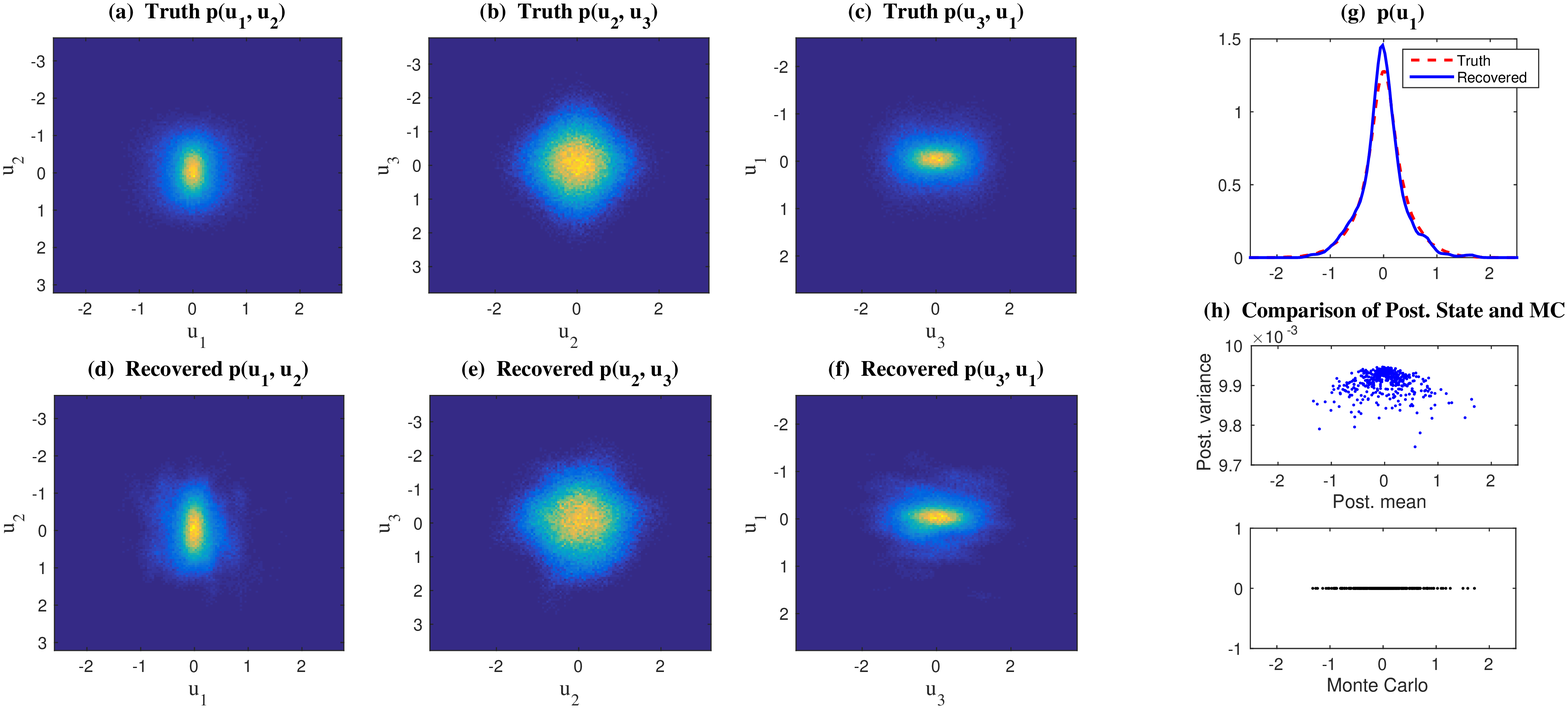}
  \caption{Modified triad model \eqref{TriadModel_New}, Regime I at $t=1$. Same captions as in \cref{fig:RegimeI_t1}.}
\end{figure}

\begin{figure}
  \centering\label{fig:RegimeII_t1_Noise01}
  \hspace*{-2cm}\includegraphics[width=17.5cm]{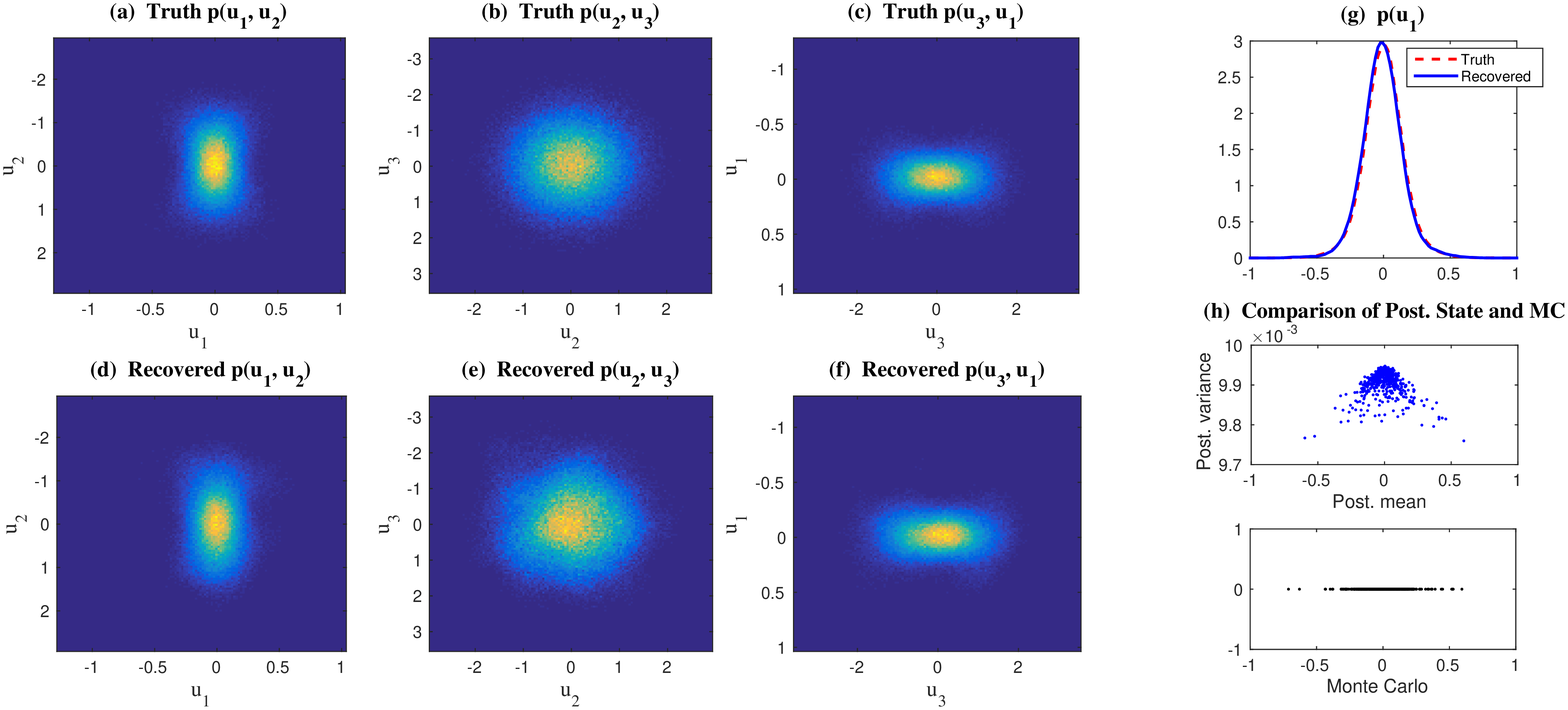}
  \caption{Modified triad model \eqref{TriadModel_New}, Regime II at $t=1$. Same captions as in \cref{fig:RegimeI_t1}.}
\end{figure}

\begin{figure}
  \centering\label{fig:RegimeII_t400_Noise01}
  \hspace*{-2cm}\includegraphics[width=17.5cm]{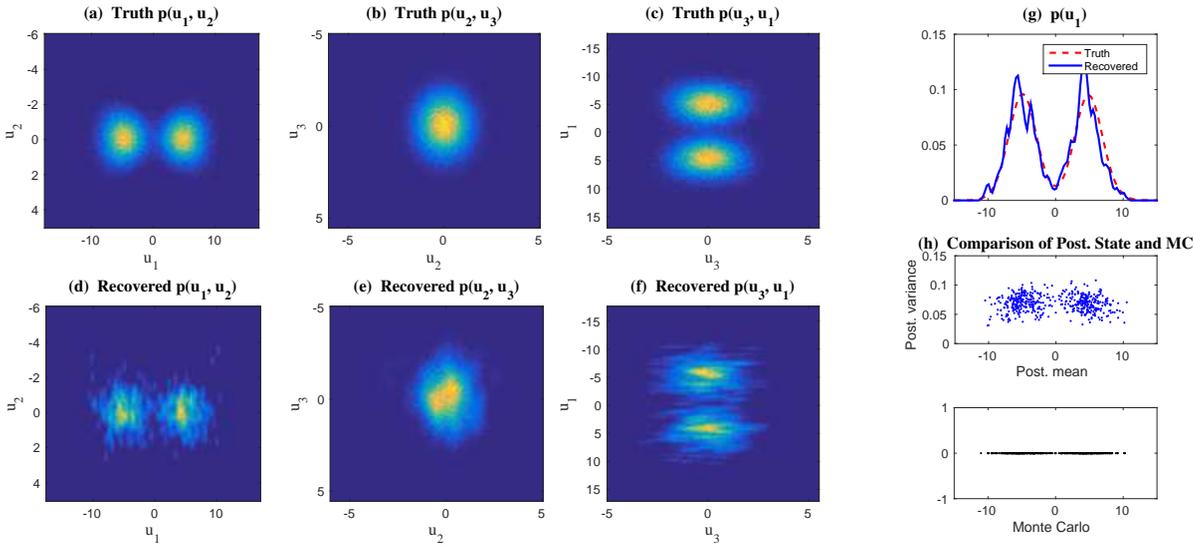}
  \caption{Modified triad model \eqref{TriadModel_New}, Regime II at $t=400$. Same captions as in \cref{fig:RegimeI_t1}.}
\end{figure}

\section{Discussion and Conclusions}\label{Sec:Conclusion}

This article presents a rigorous analysis for the efficient statistically accurate algorithms developed in \cite{chen2017efficient}, which succeed in  solving both the transient and the equilibrium solutions of Fokker-Planck equations associated with high-dimensional nonlinear turbulent dynamical systems with conditional Gaussian structures. Despite the conditional Gaussianity, these nonlinear systems capture many strong non-Gaussian features such as intermittency and fat-tailed PDFs. The algorithms involve a hybrid strategy that requires only a small number of samples $L$ to capture both the transient and the equilibrium non-Gaussian PDFs with high accuracy.

\cref{thm:MISE} shows that the MISE in the recovered high-dimensional PDFs associated with the unresolved variables $\utwo$ is bounded by $\E (\text{det}(\bfR_\two)^{-1/2})$, where $\bfR_\two$ is completely determined by the underlying dynamical systems and it has no dependence on the sample size $L$. This is fundamentally different from the direct application of the kernel methods to recover the PDF of $\utwo$, in which the bandwidth of the kernel $H$ is scaled as a reciprocal of  $L$ to a certain power and the resulting MISE is proportional to $L^{-1/N_\two}$.
This implies the curse of dimensionality in the kernel density estimation and other smoothed Monte Carlo methods due to the fact that $L$ has to increase exponentially as $N_\two$ in order to guarantee the accuracy in the solution. As is shown in \cref{thm:MISE}, many fewer samples are needed in the efficient statistically accurate algorithms in order to reach the same accuracy as using the smoothed Monte Carlo methods, especially with a large $N_\mathbf{II}$.
\cref{thm:ergodic} affirms the long term persistence of the efficient statistically accurate algorithms in a rigorous way under the assumption that the joint process $(\mathbf{u}_\mathbf{I},\mathbf{u}_\two)$ is controllable and stochastically stable. It also provides a lower bound of $\mathbf{R}_\two$ using the controllability condition. The validations of the controllability and other theoretical conditions in the algorithms are demonstrated in the numerical simulations in \cref{Sec:Numerics}.
Furthermore,  \cref{prop:SF} illustrates that the turbulent dynamical systems with quadratic energy conserving nonlinear interactions \cite{majda2016introduction} automatically satisfy all the conditions for the long time persistence. This justifies the skillful performance of the efficient statistically accurate algorithms in the numerical tests reported in \cite{chen2017efficient} and provides important guidelines for future applications.

\section*{Appendix}
\appendix
This appendix contains the following information. Section \ref{Sec:TwoLemmas} states and proves two lemmas that are needed to prove \cref{thm:MISE}. Section \ref{sec:proofcontrol} shows the proof of \cref{prop:control} regarding the controllability and observability and Section \ref{sec:contract} includes the discussions of the contraction of the Riccati flow. The controllability and long time behavior of the conditional Gaussian turbulent dynamical systems with energy-conserving quadratic nonlinearity in \cref{prop:SF} are demonstrated in Section \ref{Sec:Proof_Stochastic_Model}. Finally, extra numerical examples of the triad model and modified version of the triad model are shown in \ref{Sec:ModifyModel}.

\section{Two lemmas for \cref{thm:MISE}}\label{Sec:TwoLemmas}

\begin{lem}
\label{lem:L2Gaussian}
Let $p(\utwo)$ be the PDF of $\mathcal{N}(\bfa, \bfSigma) $, its $L^2$ norm is
\[
\int p^2 (\utwo)d \utwo =\frac{1}{\sqrt{\text{det} (\pi \bfSigma)}}.
\]
\end{lem}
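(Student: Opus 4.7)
The plan is to reduce this to a standard Gaussian normalization computation, by recognizing that the pointwise square of a Gaussian density is itself proportional to another Gaussian density. First I would write
\[
p(\utwo) = (2\pi)^{-N_\two/2}(\det\bfSigma)^{-1/2}\exp\!\bigl(-\tfrac{1}{2}(\utwo-\bfa)^*\bfSigma^{-1}(\utwo-\bfa)\bigr),
\]
so that squaring doubles the quadratic form in the exponent to produce
\[
p(\utwo)^2 = (2\pi)^{-N_\two}(\det\bfSigma)^{-1}\exp\!\bigl(-\tfrac{1}{2}(\utwo-\bfa)^*(2\bfSigma^{-1})(\utwo-\bfa)\bigr).
\]
The integrand is now an unnormalized Gaussian with precision matrix $2\bfSigma^{-1}$, i.e.\ effective covariance $\bfSigma/2$, so integrating over $\reals^{N_\two}$ just pulls out the normalizing constant $\sqrt{\det(2\pi\cdot\bfSigma/2)}=\sqrt{\det(\pi\bfSigma)}$.

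Combining this with the prefactor $(2\pi)^{-N_\two}(\det\bfSigma)^{-1}$ and simplifying via $\det(\pi\bfSigma)=\pi^{N_\two}\det\bfSigma$ then yields the claimed expression. The mean vector $\bfa$ enters only through a translation of the integration variable and plays no role in the final answer. An equivalent route, which I would use as a sanity check, is to diagonalize $\bfSigma=U\Lambda U^*$ by an orthogonal change of variables and factor the integral into a product of one-dimensional Gaussian integrals of the form $\int(2\pi\sigma_i^2)^{-1}\exp(-y^2/\sigma_i^2)\,dy$; each of these is handled by the substitution $v=y/\sigma_i$ together with the elementary identity $\int_{\reals}e^{-v^2}\,dv=\sqrt{\pi}$, and the product over $i$ reconstructs the determinantal form.

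There is no substantive obstacle here; this is a one-line calculation once the right normalization trick is applied. The only bookkeeping point I would be careful about is the factor of two that appears when squaring the Gaussian exponent, which changes the effective covariance from $\bfSigma$ to $\bfSigma/2$ and is precisely the source of the $\pi$ rather than $2\pi$ inside the determinant in the stated formula.
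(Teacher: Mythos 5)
Your approach is the same one the paper uses (recognize that $p^2$ is proportional to the density of $\mathcal{N}(\bfa,\bfSigma/2)$ and pull out its normalizer), but the final bookkeeping step does not go through: the factor of two you flag at the end as the "only bookkeeping point" is precisely what breaks the claimed identity. Combining your (correct) Gaussian integral $\int\exp\bigl(-(\utwo-\bfa)^*\bfSigma^{-1}(\utwo-\bfa)\bigr)\,d\utwo=\sqrt{\det(\pi\bfSigma)}$ with the prefactor $(2\pi)^{-N_\two}(\det\bfSigma)^{-1}$ gives
\[
\int p^2(\utwo)\,d\utwo=\frac{\sqrt{\det(\pi\bfSigma)}}{(2\pi)^{N_\two}\det\bfSigma}=\frac{1}{2^{N_\two}\sqrt{\det(\pi\bfSigma)}}=\frac{1}{\sqrt{\det(4\pi\bfSigma)}},
\]
not $\det(\pi\bfSigma)^{-1/2}$; the assertion that this combination "yields the claimed expression" silently drops the factor $2^{-N_\two}$. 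Your own proposed sanity check would have caught it: for a one-dimensional standard normal, $\int(2\pi)^{-1}e^{-y^2}\,dy=\sqrt{\pi}/(2\pi)=1/(2\sqrt{\pi})$, whereas the lemma asserts $1/\sqrt{\pi}$.

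To be fair, the statement as printed is itself off by the same factor $2^{N_\two}$, and the paper's own proof contains the mirror image of your slip: it factors the constant correctly as $\det(2\pi\bfSigma)^{-1}=\det(\pi\bfSigma)^{-1/2}\det(4\pi\bfSigma)^{-1/2}$, but then declares $\det(4\pi\bfSigma)^{-1/2}\exp\bigl(-(\utwo-\bfa)^*\bfSigma^{-1}(\utwo-\bfa)\bigr)$ to be the normalized density of $\mathcal{N}(\bfa,2\bfSigma)$. It is not: that exponent corresponds to covariance $\bfSigma/2$, whose normalizer is $\det(\pi\bfSigma)^{1/2}$, so this factor integrates to $2^{-N_\two}$ rather than to one. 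The correct value of the integral is $\det(4\pi\bfSigma)^{-1/2}$. This only perturbs constants downstream: since the true value is smaller than the quoted one, the variance bounds in \cref{thm:MISE} and \cref{prop:marginal} remain valid upper bounds, merely loose by a power of two.
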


\begin{proof}
Note the Gaussian density has form
\[
p(\utwo)=\frac{1}{\sqrt{\text{det}(2\pi \bf\Sigma)}}\exp\left(-\frac12 (\utwo-\bfa)\cdot \bfSigma^{-1}(\utwo-\bfa)\right).
\]
Its square can be decomposed as
\[
p^2(\utwo)=\frac{1}{\sqrt{\text{det}(\pi \bfSigma )}}\cdot \frac{1}{\sqrt{\text{det}(4\pi \bfSigma)}}\exp\left(- (\utwo-\bfa)\cdot \bfSigma^{-1}(\utwo-\bfa)\right).
\]
The second part is the Gaussian density $\mathcal{N}(\bfa, 2\bfSigma)$, so its integral is one. This concludes our proof.
\end{proof}

\begin{lem}
\label{lem:kernel}
Suppose $p_t(\uone,\utwo)$ has bounded third derivative as \eqref{eqn:thirder}. Consider filtering it with kernel $K_H$ at the $\uone$ components, define
\[
\bar{p}_t(\uone,\utwo)=\int K_H(\uone-\uone')p_t(\uone',\utwo) d\uone' .
\]
Then $\int |\bar{p}_t(\uone,\utwo)-p_t(\uone,\utwo)|^2d\uone d\utwo$ is bounded by
\[
 \frac{1+\delta}{4} H^2 J\left( \sum_{i=1}^{N_\one} c_i^2\partial^2_{\bfu_{\one,i}^2} p_t(\uone,\utwo)\right)+\frac{1+\delta^{-1}}2 M^2 H^3\left(\sum_{i=1}^{N_\one}c_i^2\right)^3 J(M(\uone,\utwo)),\quad \forall \delta>0.
\]
\end{lem}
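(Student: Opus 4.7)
The plan is to Taylor-expand $p_t(\uone-\uone',\utwo)$ around $\uone$ to second order in $\uone'$, exploit the symmetry and diagonal covariance of the Gaussian kernel $K_H$ to compute the first and second moments exactly, and then control the cubic Taylor remainder pointwise via \eqref{eqn:thirder}. The main obstacle will be converting the pointwise cubic remainder bound into the advertised $H^3(\sum_i c_i^2)^3 J(M)$ form, which requires a Gaussian sixth-moment computation.

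First, since $\int K_H(\uone')d\uone'=1$, I would write
\[
\bar{p}_t(\uone,\utwo)-p_t(\uone,\utwo)=\int K_H(\uone')\bigl(p_t(\uone-\uone',\utwo)-p_t(\uone,\utwo)\bigr)d\uone'.
\]
For each fixed $\uone'$, apply Taylor's theorem with integral remainder to the one-variable function $\phi(s):=p_t(\uone-s\uone'/|\uone'|,\utwo)$ on $[0,|\uone'|]$. The direction $-\uone'/|\uone'|$ is a unit vector, so \eqref{eqn:thirder} bounds $|\phi'''(s)|$ by $M(\uone,\utwo)$ uniformly in $s$, producing the pointwise cubic remainder estimate $|R(\uone,\uone',\utwo)|\le \tfrac{1}{6}|\uone'|^3 M(\uone,\utwo)$. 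Since $K_H$ factors as a product of centered one-dimensional Gaussians with variance proportional to $Hc_i^2$ in the $i$-th coordinate, integration against $K_H$ annihilates every $(\uone')_i$ by symmetry and every off-diagonal $(\uone')_i(\uone')_j$, so
\[
\bar{p}_t(\uone,\utwo)-p_t(\uone,\utwo)=\tfrac{H}{2}\sum_i c_i^2\partial^2_{\bfu_{\one,i}^2}p_t(\uone,\utwo)+\widetilde R(\uone,\utwo),\quad \widetilde R(\uone,\utwo):=\int K_H(\uone')R(\uone,\uone',\utwo)d\uone'.
\]

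I would then apply the elementary inequality $(a+b)^2\le (1+\delta)a^2+(1+\delta^{-1})b^2$ pointwise and integrate in $(\uone,\utwo)$. The $(1+\delta)a^2$ piece immediately produces the leading-order bias $\tfrac{1+\delta}{4}H^2 J(\sum_i c_i^2\partial^2_{\bfu_{\one,i}^2}p_t)$.

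The main obstacle is bounding $\int \widetilde R(\uone,\utwo)^2 d\uone d\utwo$. Here I would apply Cauchy--Schwarz against the probability measure $K_H(\uone')d\uone'$,
\[
\widetilde R(\uone,\utwo)^2\le \int K_H(\uone')R(\uone,\uone',\utwo)^2 d\uone'\le \tfrac{M(\uone,\utwo)^2}{36}\int |\uone'|^6 K_H(\uone')d\uone',
\]
and evaluate the Gaussian sixth moment by expanding $(\sum_i(\uone')_i^2)^3$ and using the one-dimensional identity $\mathbb{E}[(\uone')_i^{2n}]=(2n-1)!!(Hc_i^2)^n$ for each independent coordinate. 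Collecting the three types of terms (all indices equal, two equal, all distinct) yields a dimension-independent constant times $H^3(\sum_i c_i^2)^3$. Integrating in $(\uone,\utwo)$ converts $M(\uone,\utwo)^2$ into $J(M)$, and combining with the $(1+\delta^{-1})$ factor from the elementary inequality produces the claimed cubic-order bound, completing the proof.
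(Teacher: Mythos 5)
Your proposal is correct and follows essentially the same route as the paper's proof: second-order Taylor expansion, cancellation of the odd moments by symmetry of $K_H$, the diagonal second moment producing $\tfrac{H}{2}\sum_i c_i^2\partial^2_{\bfu_{\one,i}^2}p_t$, Young's inequality with parameter $\delta$, and Cauchy--Schwarz against $K_H(\uone')d\uone'$ followed by the Gaussian sixth-moment bound $15H^3(\sum_i c_i^2)^3$ with $15/36\le 1/2$. The only cosmetic difference is that you bound the remainder by $M$ at the base point $\uone$ (reading \eqref{eqn:thirder} as uniform in $s$), whereas the paper carries $M(\uone+s\bfv,\utwo)$ along the segment and recovers $J(M)$ by translation invariance; both yield the stated bound.
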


\begin{proof}
Apply Taylor's expansion to $p_t(\uone,\utwo)$,  use \eqref{eqn:thirder} with $\bfv=\frac{\uone'-\uone}{|\uone'-\uone|}$,
\begin{align*}
\left|p_t(\uone',\utwo)-p_t(\uone,\utwo)-|\uone'-\uone| \frac{d}{d s} p_t(\uone+s\bfv, \utwo)-  \frac{|\uone'-\uone|^2}{2}\frac{d^2}{d s^2} p_t(\uone+s\bfv, \utwo)\right| \\
\leq \frac{1}{6} |\uone'-\uone|^3 \int^1_0 \left|\frac{d^3}{d s^3} p_t(\uone+s\bfv, \utwo)\right|ds  \leq\frac{1}{6} |\uone'-\uone|^3 \int^1_0 M(\uone+s\bfv,\utwo) ds .
\end{align*}
Denote the gradient and Hessian with respect to $\uone$ as $\nabla_\one$ and $\nabla^2_\one$. Note that
\[
|\uone'-\uone| \frac{d}{d s} p_t(\uone+s\bfv, \utwo)=\nabla_{\one} p_t(\uone,\utwo)\cdot (\uone'-\uone),
\]
and
\[
|\uone'-\uone|^2\frac{d^2}{d s^2} p_t(\uone+s\bfv, \utwo)=(\uone'-\uone)\cdot[\nabla^2_{\one} p_t(\uone,\utwo)](\uone'-\uone).
\]
Therefore,
\begin{align*}
\bigg|p_t(\uone',\utwo)-p_t(\uone,\utwo)- \nabla_{\one}& p_t\cdot (\uone'-\uone) -  \frac{1}{2}(\uone'-\uone)\cdot\nabla^2_{\one} p_t(\uone'-\uone)\bigg|\\
&\leq \frac{1}{6} |\uone'-\uone|^3 \int^1_0 M(\uone+s\bfv,\utwo) ds .
\end{align*}
To continue, we write
\begin{align*}
|\bar{p}_t(\uone,\utwo)-p_t(\uone,\utwo)|&=\left|\int K_H(\uone-\uone')(p_t(\uone',\utwo)-p_t(\uone,\utwo)) d\uone'\right|\\
&\leq \left|\int K_H(\uone-\uone') \nabla_{\one} p_t(\uone,\utwo)\cdot(\uone'-\uone)  d\uone'\right|\\
&\quad+\frac12\left|\int K_H(\uone-\uone') (\uone'-\uone)\cdot\nabla^2_{\one} p_t(\uone,\utwo)](\uone'-\uone)d\uone'\right|\\
&\quad+\frac16\int K_H(\uone-\uone')|\uone-\uone'|^3 M(\uone+s\bfv,\utwo) d\uone' ds,
\end{align*}
where the range of $s$ is $[0,1]$.

Note that by the symmetry of $K_H(\uone)$ in $\uone$,
\[
\int K_H(\uone'-\uone) \nabla_{\one} p_t(\uone,\utwo)\cdot( \uone'-\uone) d\uone'=0.
\]
Note that $K_H(\uone'-\uone)$ is the density of $\mathcal{N}(\uone,H \bfC)$, where $\bfC$ is diagonal with diagonal entries $c^2_i$. If we let $Z$ be a random variable following this distribution
\begin{align*}
&\int K_H(\uone-\uone') (\uone'-\uone)\cdot [\nabla^2_{\one} p_t(\uone,\utwo)](\uone'-\uone)d\uone'\\
&= \E Z\cdot [\nabla^2_{\one} p_t]Z=\E \text{tr}([\nabla^2_{\one} p_t]ZZ^*)=H \text{tr}(\nabla^2_{\one} p_t\bfC)=
H \sum_{i=1}^{N_\one} c_i^2\partial^2_{\bfu_{\one,i}^2} p_t(\uone,\utwo).
\end{align*}
Therefore
\[
|\bar{p}_t(\uone,\utwo)-p_t(\uone,\utwo)|\leq   \frac12 H \left|\sum_{i=1}^{N_\one} c_i^2\partial^2_{\bfu_{\one,i}^2} p_t\right|+\frac16\int K_H(\uone-\uone')|\uone-\uone'|^3 M(\uone+s\bfv,\utwo) d\uone' ds.
\]
Using Young's inequality $(a+b)^2\leq (1+\delta)a^2+(1+\delta^{-1}) b^2$,  $|\bar{p}_t(\uone,\utwo)-p_t(\uone,\utwo)|^2$ is bounded by
\[
 \frac{1+\delta}{4} H^2\left( \sum_{i=1}^{N_\one} c_i^2\partial^2_{x_i^2} p_t(\uone,\utwo)\right)^2+\frac{1+\delta^{-1}}{36}\left(\int K_H(\uone-\uone')|\uone-\uone'|^3 M(\uone+s\bfv,\utwo) d\uone' ds\right)^2.
\]
Finally, by Cauchy Schwartz,
\begin{align*}
&\left(\int K_H(\uone-\uone')|\uone-\uone'|^3 M(\uone+s\bfv,\utwo) d\uone' ds\right)^2\\
&\leq  \left(\int K_H(\uone-\uone')|\uone-\uone'|^6 d\uone' ds\right)\left(\int K_H(\uone-\uone')M^2(\uone+s\bfv,\utwo) d\uone' ds\right).
\end{align*}
Note again $K_H$ is the density of $Z\sim \mathcal{N}(0, H\bfC)$,
\[
\int K_H(\uone-\uone') |\uone'-\uone|^6 d\uone'= \E |Z|^6=15H^3\left(\sum_{i=1}^{N_\one}c_i^2\right)^3.
\]
Therefore
\begin{align*}
&\int d\uone d\utwo \left(\int K_H(\uone-\uone')|\uone-\uone'|^3 M(\uone+s\bfv,\utwo) d\uone' ds\right)^2\\
&\leq  15H^3\int  d\uone d\utwo \int K_H(\uone-\uone')M^2(\uone+s\bfv,\utwo) d\uone' ds\\
&=15H^3 \int K_H(\uone-\uone') d\uone' ds \int d\uone d\utwo M^2(\uone+s\bfv,\utwo)\\
&=15H^3 J(M(\uone,\utwo))\int K_H(\uone-\uone') d\uone' ds=15H^3 \left(\sum_{i=1}^{N_\one}c_i^2\right)^3J(M(\uone,\utwo)).
\end{align*}
Combine these estimates with $\frac{15}{36}\leq \frac12$, we have our claimed bound.
\end{proof}

\section{Controllability and observability}
\label{sec:proofcontrol}

\begin{proof}[Proof of \cref{prop:control}]
Note that Riccati equation of $\bfR_\two(t)$ is given by
\[
\frac{d}{dt}\bfR_\two(t)=\bfa_1(t) \bfR_\two(t)+\bfR_\two(t) \bfa_1^*(t)+\bfSigma_\two\bfSigma_\two^*-\bfR_\two(t) \bfA_1^*(t) (\bfSigma_\one \bfSigma_\one^*)^{-1} \bfA_1(t) \bfR_\two(t).
\]
In \cite{bishop2016stability},  the matrix flow generated by this equation was studied. (In \cite{bishop2016stability}, it was termed as $\phi(t)(Q)$). Define the controllability and observability Gramian \cite{bishop2016stability}:
\[
\calC_{s,t}=\int^t_s \calE_{r,t} \bfSigma_\two\bfSigma_\two^*\calE_{r,t}^* dr.
\]
\[
\calO_{s,t}=\int^t_s(\calE_{r,t}^*)^{-1} \bfA^*_1(r,\uone(r)) [\bfSigma_\one\bfSigma^*_\one]^{-1}\bfA_1(r,\uone(r)) \calE_{r,t}^{-1}.
\]
Define also the following processes:
\[
\calD_{s,t}=\calC_{s,t}^{-1}\left[\int^t_s \calE_{r,t} \calC_{s,r} \bfA^*_1(r,\uone(r)) [\bfSigma_\one\bfSigma^*_\one]^{-1}\bfA_1(r,\uone(r)) \calC_{s,r} \calE^*_{r,t} dr\right] \calC_{s,t}^{-1}.
\]
\[
\calF_{s,t}=\calO_{s,t}^{-1}\left[\int^t_s (\calE^*_{r,t})^{-1} \calO_{s,r} \bfSigma_\two \bfSigma_\two^* \calO_{s,r} \calE^{-1}_{r,t} dr\right] \calO_{s,t}^{-1}.
\]
Theorem 4.4 in \cite{bishop2016stability} has shown that
\begin{equation}
\label{tmp:Rtbounds}
[\calD_{s,t}+\calC_{s,t}^{-1}]^{-1}\preceq \bfR_\two(t)\preceq\calO^{-1}_{s,t}+\calF_{s,t},
\end{equation}
by applying the comparison principal between the Riccati equation of $\bfR_\two(t)$ and the bounds in \eqref{tmp:Rtbounds}.

For the purpose of this proposition, we focus only on the left hand side of \eqref{tmp:Rtbounds}. First we find bounds for the controllability Gramian. Note that
\[
\calC_{s,t}\preceq \sigma^2_{\two, +} \int^t_s \calE_{r,t}\calE_{r,t}^* dr\preceq D_c (t-s)\sigma^2_{\two, +} I_{N_\two}.
\]
\[
\calC_{s,t}\succeq \sigma^2_{\two, -} \int^t_s \calE_{r,t}\calE_{r,t}^* dr\succeq D_c^{-1}\sigma^2_{\two, -} (t-s) I_{N_\two}.
\]
Then for $t-v\leq s\leq t$, apply  \cref{aspt:control} and the bounds above, we have
\begin{align*}
\calD_{s,t}&\preceq D_c  \calC_{s,t}^{-1}\left[\int^t_s (|\uone(r)|^{2m}+1) \calE_{r,t} \calC_{s,r}  \calC_{s,r} \calE^*_{r,t} dr \right] \calC_{s,t}^{-1}\\
&\preceq v^2 \sigma^2_{\two, +}D^3_c   \calC_{s,t}^{-1}\left[\int^t_s(|\uone(r)|^{2m}+1)\calE_{r,t}  \calE^*_{r,t} dr\right] \calC_{s,t}^{-1}\\
&\preceq v^2 \sigma^2_{\two, +}D^4_c   \left(\int^t_s (|\uone(r)|^{2m}+1)dr\right)\calC_{s,t}^{-2}\\
&\preceq v^2\sigma^2_{\two, +} \sigma^{-2}_{\two, -}D^6_c\left(t-s+\int^t_{s} |\uone(r)|^{2m}dr\right)I_{N_\two}.
\end{align*}
Consequentially, by taking $s=t-v$, we have $\bfR_\two(t)^{-1}\preceq h_{t,v}(\uone) I_{N_\two}$, where
\[
h_{t,v}(\uone):=v^2 \sigma^2_{\two, +} \sigma^{-2}_{\two, -}D^6_c\left(v+\int^t_{t-v} |\uone(r)|^{2m}dr\right) +v^{-1}D_c\sigma^{-2}_{\two, -}.
\]
In particular, this leads to $\sqrt{\text{det}\bfR_\two(t)}^{-1}\leq \sqrt{\det(h_{t,v}(\uone) I_{N_\two})}=h^{N_\two/2}_{t,v}(\uone)$. Note that for  for positive $a,b$ and $m\geq 1$,   $(a+b)^m\leq 2^m(a^m+b^m)$, we have
\begin{align*}
h^{\frac{N_\two}{2}}_{t,v}(\uone)\leq &2^{\frac{N_\two}{2}}\left(v^3\sigma^2_{\two, +} \sigma^{-2}_{\two, -}D^6_c+v^{-1}D_c\sigma^{-2}_{\two, -}\right)^\frac{N_\two}{2}\\
&+2^{\frac{N_\two}{2}}\left(v^2\sigma^2_{\two, +} \sigma^{-2}_{\two, -}D^6_c\right)^\frac{N_\two}{2}\left(\int^t_{t-v} |\uone(r)|^{2m}dr\right)^\frac{N_\two}{2}.
\end{align*}
Then applying Holder's inequality yields
\[
\left(\int^t_{t-v} |\uone(r)|^{2m}dr\right)^\frac{N_\two}{2}\leq v^{\frac{N_\two-2}{2}} \int^t_{t-v} |\uone(r)|^{mN_\two}dr.
\]
Combining the estimates above finishes the proof of \cref{prop:control}.
\end{proof}

In symmetry to \cref{prop:control}, we can also find an upper bound for $\bfR_\two(t)$. For that purpose,  let $\sigma_{A,-}^2(t)\geq 0$ be a stochastic process that satisfies:
\[
\sigma_{A,-}^2(t)I_{N_\two}\preceq\bfA^*_1(t,\uone(t)) [\bfSigma_\one\bfSigma^*_\one]^{-1}\bfA_1(t,\uone(t)).
\]
\begin{prop}
\label{prop:obs}
Under \cref{aspt:control}, for $t\geq v$, we have $\|\bfR_\two(t)\|\preceq g_{v,t}(\uone)$, where
\begin{multline*}
g_{v,t}(\uone):=D^2_c\left(v+\int^t_{t-v} |\uone(r)|^{2m} dr\right)\\ +vD^{5}_c \sigma^2_{\two,+} \left(v+\int^t_{t-v} |\uone(r)|^{2m} dr\right)^2\left(\int^t_{t-v} \sigma_{A,-}^2(r) dr\right)^{-2}.
\end{multline*}
\end{prop}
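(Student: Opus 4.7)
The plan is to mirror the argument used for \cref{prop:control}, but starting from the complementary (upper) side of the Bishop--Del Moral sandwich \eqref{tmp:Rtbounds}:
\[
\bfR_\two(t)\preceq \calO^{-1}_{s,t}+\calF_{s,t}.
\]
We take $s=t-v$ and bound the two summands separately, using exactly the ingredients collected in \cref{aspt:control} together with the new lower bound $\sigma^2_{A,-}(r)I_{N_\two}\preceq \bfA^*_1[\bfSigma_\one\bfSigma^*_\one]^{-1}\bfA_1$.

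For the first summand I first obtain a lower bound on $\calO_{s,t}$. Using $\calE_{r,t}\calE_{r,t}^*\preceq D_cI$ and the hypothesis on $\sigma^2_{A,-}$, for any vector $w$ we have
\[
w^*(\calE_{r,t}^*)^{-1}\bfA^*_1[\bfSigma_\one\bfSigma^*_\one]^{-1}\bfA_1\calE_{r,t}^{-1}w
\;\succeq\; \sigma^2_{A,-}(r)\,w^*(\calE_{r,t}\calE_{r,t}^*)^{-1}w\;\succeq\; D_c^{-1}\sigma^2_{A,-}(r)|w|^2,
\]
so $\calO_{s,t}\succeq D_c^{-1}\bigl(\int_{t-v}^t\sigma^2_{A,-}(r)\,dr\bigr)I$ and hence a power of $D_c$ times $\bigl(\int_{t-v}^t\sigma^2_{A,-}(r)\,dr\bigr)^{-1}$ controls $\calO^{-1}_{s,t}$.

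For $\calF_{s,t}$ I need an \emph{upper} bound on the intermediate Gramians $\calO_{s,r}$. The upper bound on $\bfA^*_1[\bfSigma_\one\bfSigma^*_\one]^{-1}\bfA_1$ from \cref{aspt:control} combined with $(\calE_{r,t}\calE_{r,t}^*)^{-1}\preceq D_c I$ gives
\[
\calO_{s,r}\;\preceq\; D_c^2\Bigl(v+\int_{t-v}^{t}|\uone(\tau)|^{2m}\,d\tau\Bigr)I,\qquad s\le r\le t.
\]
Using $\bfSigma_\two\bfSigma_\two^*\preceq \sigma^2_{\two,+}I$, the sandwich identity $B A B\preceq \|A\|\,B^2$ for $A,B\succeq 0$, and $(\calE_{r,t}\calE_{r,t}^*)^{-1}\preceq D_c I$ again, I can pointwise bound the integrand in the definition of $\calF_{s,t}$ by a scalar multiple of $I$, integrate in $r$ over a window of length $v$, and finally use $\calO_{s,t}^{-1}M\calO_{s,t}^{-1}\preceq \|M\|\,\calO_{s,t}^{-2}$ together with the lower bound on $\calO_{s,t}$ obtained above. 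The net result is precisely the second term of $g_{t,v}$, namely a constant times $v\,D_c^{5}\sigma^2_{\two,+}\bigl(v+\int|\uone|^{2m}\bigr)^2\bigl(\int\sigma^2_{A,-}\bigr)^{-2}$.

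Adding the two bounds yields $\|\bfR_\two(t)\|\preceq g_{t,v}(\uone)I_{N_\two}$. The main technical obstacle is the $\calF_{s,t}$ estimate: one has to nest three PSD sandwich inequalities (flow conjugation, $\bfSigma_\two\bfSigma_\two^*$, and $\calO_{s,t}^{-1}$) in the right order so that every scalar factor can be pulled out cleanly, and then keep careful track of the powers of $D_c$ and of $v$ introduced at each step. Once the sandwich is organized correctly, the calculation runs parallel to the $\calD_{s,t}$ estimate in the proof of \cref{prop:control}, with the roles of the controllability and observability Gramians exchanged, of $\sigma^2_{\two,\pm}$ exchanged with the corresponding bounds on $\bfA^*_1[\bfSigma_\one\bfSigma^*_\one]^{-1}\bfA_1$, and of $\calE_{r,t}$ exchanged with $(\calE_{r,t}^*)^{-1}$.
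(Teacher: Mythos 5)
Your proposal is correct and follows essentially the same route as the paper: take the upper half of the sandwich \eqref{tmp:Rtbounds} with $s=t-v$, lower-bound $\calO_{s,t}$ by $D_c^{-1}\int_{t-v}^t\sigma^2_{A,-}(r)\,dr$, upper-bound $\calO_{s,r}$ by $D_c^2\bigl(v+\int_{t-v}^t|\uone(r)|^{2m}dr\bigr)$, and nest the positive-semidefinite sandwich estimates (for $\bfSigma_\two\bfSigma_\two^*$, the flow conjugation, and $\calO_{s,t}^{-1}$) to bound $\calF_{s,t}$, arriving at exactly the paper's powers of $D_c$, $v$ and $\sigma_{\two,+}$. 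The only mismatch is in the first summand: the bound you derive for $\calO_{s,t}^{-1}$ is $D_c\bigl(\int_{t-v}^t\sigma^2_{A,-}(r)\,dr\bigr)^{-1}$, whereas the stated $g_{v,t}$ carries $D_c^2\bigl(v+\int_{t-v}^t|\uone(r)|^{2m}dr\bigr)$ in that slot --- a discrepancy already present in the paper's own write-up, whose proof establishes the former quantity but records the latter in the conclusion.
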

Note that $\sigma_{A,-}^2(t)$ essentially characterizes how well the observation of $\utwo$ is made at time $t$.  It can be zero or close to zero in certain time periods. During such time periods, little information is provided by the upper bound in \cref{prop:obs}.

\begin{proof}[Proof of  \cref{prop:obs}]
 We continue our discussion from the end of the proof of  \cref{prop:control}. For $t-v\leq s\leq t$, the observability Gramian is bounded by
\begin{align*}
\calO_{s,t}&=\int^t_s(\calE_{r,t}^*)^{-1} \bfA^*_1(r,\uone(r)) [\bfSigma_\one\bfSigma^*_\one]^{-1}\bfA_1(r,\uone(r)) \calE_{r,t}^{-1}\\
&\preceq D_c  \int^t_s (|\uone(r)|^{2m}+1) (\calE_{r,t}\calE_{r,t}^*)^{-1} dr\\
&\preceq D^2_c\left(t-s+\int^t_s |\uone(r)|^{2m} dr\right) I_{N_\two}.
\end{align*}
Likewise, we have
\[
\calO_{s,t}\succeq  \int^t_s \sigma_{A,-}^2(r)(\calE_{r,t}\calE_{r,t}^*)^{-1} dr\succeq D^{-1}_c \left(\int^t_s \sigma_{A,-}^2(r) dr\right) I_{N_\two}.
\]
Therefore
\begin{align*}
\calF_{s,t}&=\calO_{s,t}^{-1}\left[\int^t_s (\calE^*_{r,t})^{-1} \calO_{s,r} \bfSigma_\two \bfSigma_\two^* \calO_{s,r} \calE^{-1}_{r,t} dr\right] \calO_{s,t}^{-1}\\
&\preceq \sigma^2_{\two,+} \calO_{s,t}^{-1}\left[\int^t_s (\calE^*_{r,t})^{-1} \calO^2_{s,r} \calE^{-1}_{r,t} dr\right] \calO_{s,t}^{-1}\\
&\preceq D^{2}_c \sigma^2_{\two,+} \left(v+\int^t_s |\uone(r)|^{2m} dr\right)^2\calO_{s,t}^{-1}\left[\int^t_s (\calE_{r,t}\calE^*_{r,t})^{-1}  dr\right] \calO_{s,t}^{-1}\\
&\preceq vD^{5}_c \sigma^2_{\two,+} \left(v+\int^t_s |\uone(r)|^{2m} dr\right)^2\left(\int^t_s \sigma_{A,-}^2(r) dr\right)^{-2} I_{N_\two}.
\end{align*}
In conclusion, by \cref{tmp:Rtbounds} we have $\bfR_\two(t)\preceq g_{v,t}(\uone)I_{N_\two}$, where
\begin{multline*}
g_{v,t}(\uone):=D^2_c\left(v+\int^t_{t-v} |\uone(r)|^{2m} dr\right)\\ +vD^{5}_c \sigma^2_{\two,+} \left(v+\int^t_{t-v} |\uone(r)|^{2m} dr\right)^2\left(\int^t_{t-v} \sigma_{A,-}^2(r) dr\right)^{-2}.
\end{multline*}
\end{proof}

\section{Contraction of the Riccati flow}
\label{sec:contract}
One practical issue in applying the hybrid method is how to initialize $\bfR_\two(0)$, as $p_0(\uone,\utwo)$ often does not  have an explicit form. In fact, the value of $\hat{p}_t$ has a diminishing dependence on $\bfR_\two(0)$ as $t\to \infty$. One way to characterize this is to consider a covariance process $\bfR_\two'(t)$  that follows
\[
d\bfR_\two'=[\bfa_1 \bfR_\two'+\bfR_\two' \bfa_1^*+\bfSigma_\two\bfSigma_\two^*-\bfR_\two' \bfA_1^* (\bfSigma_\one \bfSigma_\one^*)^{-1} \bfA_1 \bfR_\two']dt.
\]
Here $\bfa_1$ and $\bfA_1$ are the same as the one in \eqref{CG_Result}, but $\bfR_\two'(0)\neq \bfR_\two(0)$. In fact, the rescaled difference between $\bfR_\two(t)$ and $\bfR_\two(t)'$ converges geometrically fast, although the contraction rate depends on the realization of $\uone$:
\begin{prop}
\label{prop:converge}
Under the same assumptions of \cref{prop:obs}, for any $t\geq s\geq v$,
\begin{multline*}
\|\bfR_\two(t)-\bfR_\two'(t)\|\leq \sqrt{\|\bfR_\two(t)\|\|\bfR_\two(s)^{-1}\|\|\bfR_\two'(t)\|\|\bfR_\two'(s)^{-1}\|}\\\times\exp\left(-\int^t_s f_{v,r}(\uone)dr\right) \|\bfR_\two(s)-\bfR_\two'(s)\|,
\end{multline*}
where $\|\,\cdot\,\|$ denotes the spectral radius, and
\[
f_{v,t}(\uone):=\sigma_{A,-}^2(t) \left(D^6_c \left(v+\int^t_{t-v} |\uone(r)|^{2m}dr\right) +D_c \right)^{-1}\geq0.
\]
\end{prop}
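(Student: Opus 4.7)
The plan is to linearize the Riccati difference and bound each factor by a Lyapunov argument driven by the conditional covariance itself.

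First, set $\Delta(t) := \bfR_\two(t) - \bfR_\two'(t)$ and abbreviate $S(t) := \bfA_1^*(\bfSigma_\one \bfSigma_\one^*)^{-1}\bfA_1$, $Q := \bfSigma_\two \bfSigma_\two^*$. Subtracting the two Riccati equations and applying the telescoping identity
\[
\bfR_\two S \bfR_\two - \bfR_\two' S \bfR_\two' = \Delta\, S \bfR_\two + \bfR_\two' S\, \Delta
\]
linearizes the quadratic term and yields
\[
\frac{d\Delta}{dt} = (\bfa_1 - \bfR_\two' S)\Delta + \Delta(\bfa_1 - \bfR_\two S)^*.
\]
Let $U, V$ be the matrix fundamental solutions of $\frac{d}{dt}U = (\bfa_1 - \bfR_\two' S)U$ and $\frac{d}{dt}V = (\bfa_1 - \bfR_\two S)V$, with $U(s) = V(s) = I$. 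By uniqueness for matrix ODEs, $\Delta(t) = U(t)\Delta(s)V(t)^*$, so by submultiplicativity of the operator norm $\|\Delta(t)\| \le \|U(t)\|\,\|V(t)\|\,\|\Delta(s)\|$. It remains to contract $\|U\|$ and $\|V\|$.

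To bound $\|V(t)\|$, pick a unit vector $y_0$, set $y(t) := V(t)y_0$, and take $y^* \bfR_\two(t)^{-1} y$ as the Lyapunov function. Differentiating $\bfR_\two^{-1}$ via $\frac{d}{dt}\bfR_\two^{-1} = -\bfR_\two^{-1}(\frac{d}{dt}\bfR_\two)\bfR_\two^{-1}$ and substituting the Riccati equation gives
\[
\frac{d}{dt}\bfR_\two^{-1} = -\bfa_1^*\bfR_\two^{-1} - \bfR_\two^{-1}\bfa_1 + S - \bfR_\two^{-1} Q \bfR_\two^{-1}.
\]
The skew $\bfa_1$ contributions cancel pairwise when the three summands of $\frac{d}{dt}(y^*\bfR_\two^{-1} y)$ are assembled, leaving
\[
\frac{d}{dt}(y^*\bfR_\two^{-1} y) = -y^* S y - y^*\bfR_\two^{-1}Q\bfR_\two^{-1}y \le -\sigma_{A,-}^2(t)\, |y|^2.
\]
Combining the elementary inequality $|y|^2 \ge (y^*\bfR_\two^{-1}y)/\|\bfR_\two^{-1}(t)\|$ with the upper bound $\|\bfR_\two(t)^{-1}\| \le h_{t,v}(\uone)$ from \cref{prop:control}, and absorbing the $\sigma_{\two,\pm}$ and $v$ prefactors of $h_{t,v}$ into an enlarged $D_c$, produces
\[
\frac{d}{dt}(y^*\bfR_\two^{-1}y) \le -2 f_{v,t}(\uone)\,y^*\bfR_\two^{-1}y.
\]
Gronwall on $[s,t]$, together with the sandwich $|y(t)|^2/\|\bfR_\two(t)\| \le y(t)^*\bfR_\two(t)^{-1}y(t)$ and $y_0^*\bfR_\two(s)^{-1}y_0 \le \|\bfR_\two(s)^{-1}\|\,|y_0|^2$, yields
\[
\|V(t)\|^2 \le \|\bfR_\two(t)\|\,\|\bfR_\two(s)^{-1}\|\exp\!\left(-2\int_s^t f_{v,r}(\uone)\,dr\right).
\]
The identical argument with $\bfR_\two'$ replacing $\bfR_\two$ gives the analogous bound for $\|U(t)\|^2$; the upper bound $\|\bfR_\two'(t)^{-1}\| \le h_{t,v}(\uone)$ also holds here because the proof of \cref{prop:control} depends only on the flow coefficients on $[t-v,t]$ and is insensitive to $\bfR_\two'(0)$ once $t \ge v$. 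Taking square roots and multiplying the two bounds against $\|\Delta(s)\|$ produces the claimed inequality.

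The main technical obstacle will be bookkeeping the constants so that the exponent comes out as $\int_s^t f_{v,r}\,dr$ exactly rather than merely proportional to it: the Lyapunov step literally produces the rate $\sigma_{A,-}^2/h_{t,v}$, and matching this with the stated $f_{v,t}$ requires enlarging $D_c$ so as to dominate the $v^2\sigma^2_{\two,+}\sigma^{-2}_{\two,-}$ and $v^{-1}\sigma^{-2}_{\two,-}$ factors inside $h_{t,v}$. A minor but notation-sensitive step is the cancellation of the skew $\bfa_1$ pieces of $\frac{d}{dt}(y^*\bfR_\two^{-1}y)$, which hinges on $y^*\bfa_1^*\bfR_\two^{-1}y$ being the conjugate of $y^*\bfR_\two^{-1}\bfa_1 y$ once all three terms are collected.
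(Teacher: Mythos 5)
Your proposal is correct and takes essentially the same route as the paper: factorize $\bfR_\two(t)-\bfR_\two'(t)$ through the two closed-loop transition matrices, contract each by monitoring $y^*\bfR_\two(t)^{-1}y$ along the flow, and convert the resulting rate $\sigma_{A,-}^2/h_{t,v}$ into $f_{v,t}$ via the lower bound of \cref{prop:control}; the only difference is that you derive the factorization and the Lyapunov identity by hand where the paper cites Proposition 3.3 and Theorem 4.8 of \cite{bishop2016stability}. The constant bookkeeping you flag at the end is a genuine wrinkle, but the paper's own proof exhibits the same mismatch between the $f_{v,t}$ it manipulates and the one in the statement, so it is not a gap specific to your argument.
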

Note that the same lower and upper bounds in  \cref{prop:control} and \cref{prop:obs} apply to $\bfR_\two(t)$ and $\bfR_\two'(t)$ here, as they are both driven by $\uone$, which is the only thing the bounds depend on.

\begin{rem}
When $\sigma_{A,-}$ is bounded uniformly away from zero and $\bfA_1$ has no dependence on $\uone$, namely $m=1$ in \cref{aspt:control}, then $f_{v,t}$ is bounded away from zero uniformly in time. In this case, one can also show the initialization of $\barut(0)$ has diminishing influence on $\hat{p}_t$, since a $\barut'(t)$ process driven by \eqref{CG_Result} will converge to $\barut(t)$ even if $\barut'(0)\neq \barut(0)$ \cite{bishop2016stability}.  Although in principle the convergence of the mean process should hold in more general scenarios, the convergence rate will have a very involved dependence on the realization of $\uone(s)$. Also it is not a significant property for our estimator, and therefore we omitted here.
\end{rem}

\begin{proof}[Proof of \cref{prop:converge}]
We continue our discussion from the end of the proof of \cref{prop:obs}. In Proposition 3.3 of \cite{bishop2016stability}, it is shown that
\[
\bfR_\two(t)-\bfR_\two'(t)= E_{s,t}(\bfR_\two)[\bfR_\two(s)-\bfR_\two'(s)]E^*_{s,t}(\bfR_\two').
\]
Here $E_{s,t}(\bfR_\two)$ is the solution to
\[
\frac{d}{dt}E_{s,t}(\bfR_\two)=[\bfa_1(t, \uone(t))-\bfR_\two(t) \bfA^*_1(t) [\bfSigma_\one\bfSigma^*_\one]^{-1}\bfA_1(t)]E_{s,t}(\bfR_\two),\quad E_{s,s}(\bfR_\two)=\bfR_\two(s).
\]
Then in the third displayed equation of the proof of theorem 4.8 in \cite{bishop2016stability}, we have the identity
\begin{multline*}
\frac{d}{dt} E^*_{s,t}(\bfR_\two) \bfR_\two(t)^{-1}E_{s,t}(\bfR_\two)=-E^*_{s,t}(\bfR_\two)( \bfR_\two(t)^{-1}\bfSigma_\two\bfSigma_\two^*\bfR_\two(t)^{-1} \\+\bfA^*_1(t) [\bfSigma_\one\bfSigma^*_\one]^{-1}\bfA_1(t))E_{s,t}(\bfR_\two).
\end{multline*}
By  \cref{prop:control}, we have $\bfA^*_1(t) [\bfSigma_\one\bfSigma^*_\one]^{-1}\bfA_1(t)\succeq f_{v,t}(\uone)\bfR_\two(t)^{-1}$, where
\[
f_{v,t}(\uone):=\sigma_{A,-}^2(t) h_{v,t}(\uone).
\]
Consequently,
\[
\frac{d}{dt} E^*_{s,t}(\bfR_\two) \bfR_\two(t)^{-1}E_{s,t}(\bfR_\two)\preceq -f_{v,t}(\uone) E^*_{s,t}(\bfR_\two) \bfR_\two(t)^{-1}E_{s,t}(\bfR_\two).
\]
By Gronwall's inequality, we have
\[
E^*_{s,t}(\bfR_\two) \bfR_\two(t)^{-1}E_{s,t}(\bfR_\two)\preceq  \bfR_\two(s)^{-1}\exp\left(-\int^t_s f_{v,r}(\uone)dr\right),
\]
and therefore,
\[
\|E_{s,t}(\bfR_\two)\|^2=\|E^*_{s,t}(\bfR_\two) E_{s,t} (\bfR_\two)\|\leq \|\bfR_\two(t)\|\|\bfR_\two(s)^{-1}\|\exp\left(-\int^t_s f_{v,r}(\uone)dr\right).
\]
The same inequality also holds for $\|E_{s,t}(\bfR_\two')\|$. In conclusion, we have
\begin{align*}
\|&\bfR_\two(t)-\bfR_\two'(t)\|\leq \|E_{s,t}(\bfR_\two)\| \|E_{s,t}(\bfR_\two')\|\|\bfR_\two(s)-\bfR_\two'(s)\|\\
&\leq  \sqrt{\|\bfR_\two(t)\|\|\bfR_\two(s)^{-1} \|\bfR_\two'(t)\|\|\bfR_\two'(s)^{-1}\|}\exp\left(-\int^t_s f_{v,r}(\uone)dr\right) \|\bfR_\two(s)-\bfR_\two'(s)\|.
\end{align*}
\end{proof}

\section{controllability and long time behavior of conditional Gaussian turbulent dynamical systems with energy-conserving quadratic nonlinearity}\label{Sec:Proof_Stochastic_Model}
\begin{proof}[Proof of  \cref{prop:SF}]
Part 1): Simply note that
\begin{align*}
& \uone\cdot( -\Lambda_{\one,0}\uone+\bfB_{\one,0} (\uone,\uone) +\bfF_\one-\Lambda_{\one,1}\utwo+\bfB_{\one,1}(\uone)\utwo ) \\
&+ \utwo\cdot( -\Lambda_{\two,0}\uone+\bfB_{\two,0} (\uone,\uone)+\bfF_\two)+(-\Lambda_{\two,1}\utwo+\bfB_{\two,1}(\uone)\utwo ))\\
&=- \uone\cdot( \Lambda_{\one,0}\uone+\Lambda_{\one,0}\utwo-\bfF_\one)- \utwo\cdot( \Lambda_{\two,0}\uone+\Lambda_{\two,0}\utwo-\bfF_\two)\\
&\leq -\lambda_- (|\uone|^2+|\utwo|^2)+ \uone\cdot \bfF_\one+  \utwo\cdot \bfF_\two\\
&\leq -\frac12\lambda_- (|\uone|^2+|\utwo|^2)+\frac{1}{2\lambda_-}(|\bfF_\one|^2+|\bfF_\two|^2).
\end{align*}
\newline
\noindent Part 2): With any fixed $\utwo\in \reals^{N_\two}$, $\utwo(t)=\calE_{s,t}\utwo$ follows the ODE
\[
\dot{\utwo}(t)=\bfa_1(t,\uone(t))\utwo(t)=(-\Lambda_{\two,1}+\bfB_{\two,1}(\uone))\utwo(t).
\]
Therefore
\[
\frac{d}{d t}|\utwo(t)|^2=-2 \utwo(t)\cdot \Lambda_{\two,1} \utwo(t)+2\utwo(t)\cdot\bfB_{\two,1}(\uone)\utwo(t)
=-2 \utwo(t)\cdot\Lambda_{\two,1} \utwo(t).
\]
By letting $\uone=0$ and $\utwo=\utwo(t)$ in \eqref{tmp:linear}, the quantity above is in the range of
\[ [-2\lambda_+ |\utwo(t)|^2, -2\lambda_- |\utwo(t)|^2].\]
This leads to the spectrum of $\calE_{s,t}\calE_{s,t}^*$ (and $\calE_{s,t}^*\calE_{s,t}$) to be between $\exp(-2\lambda_+(t-s))$ and $\exp(-2\lambda_-(t-s))$.  Therefore
\[
\calE_{s,t}\bfSigma_\two\bfSigma_\two^*\calE_{s,t}^*\preceq \sigma^2_{\two,+} \calE_{s,t}\calE_{s,t}^*\preceq  \sigma^2_{\two,+} \exp(-2\lambda_-(t-s))I_{N_\two},
\]
\[
\calE_{s,t}\bfSigma_\two\bfSigma_\two^*\calE_{s,t}^*\succeq \sigma^2_{\two,-} \calE_{s,t}\calE_{s,t}^*\succeq  \sigma^2_{\two,-} \exp(-2\lambda_+(t-s))I_{N_\two}.
\]
Consequently,
\[
\calC_{t-1,t}\succeq \sigma^2_{\two,-} \left(\int^t_{t-1}\exp(-2\lambda_+(t-s)) ds \right)I_{N_\two}= \frac{1-\exp(-2\lambda_+)}{2\lambda_+}\sigma^2_{\two,-} I_{N_\two},
\]
and for $t-1\leq s\leq t$,
\[
\calC_{t-s,t}\preceq \calC_{t-1,t}\preceq\sigma^2_{\two,+} \left(\int^t_{t-1}\exp(-2\lambda_-(t-s)) ds \right)I_{N_\two}= \frac{\sigma^2_{\two,+}}{2\lambda_-} I_{N_\two}.
\]
Finally, since we can verify $\|\Lambda_{\one, 1}\|\leq \lambda_+$, by taking $\utwo=0$ in \eqref{tmp:linear}, we find
\[
\|\bfA_1(\uone)\|^2=\|-\Lambda_{\one, 1}+\bfB_{\one, 1}(\uone)\|^2\leq  (\|\Lambda_{\one, 1}\|+\lambda_B \|\uone\|)^2\leq 2\lambda^2_++2\lambda^2_B \|\uone\|^2.
\]
Therefore, $\bfA_1(\uone) (\bfSigma_\one \bfSigma_\one^*)^{-1}\bfA_1^*(\uone)\preceq 2(\lambda^2_++\lambda^2_B \|\uone\|^2)\sigma^{-2}_{\one,-} I_{N_\two}.$
In conclusion,  \cref{aspt:control} holds with
\[
D_c=\max\left\{ 1,  \frac{2\lambda_+\sigma^{-2}_{\two,-}}{1-\exp(-2\lambda_+)}, \frac{\sigma^2_{\two,+}}{2\lambda_-}, 2\lambda^2_+\sigma^{-2}_{\one,-},2\lambda^2_B \sigma^{-2}_{\one,-},\exp(2\lambda_+)\right\}.
\]
\end{proof}

\section{Numerical simulations of the modified triad models}\label{Sec:ModifyModel}
\cref{fig:RegimeI_t20_Noise01} and \cref{fig:RegimeII_t20_Noise01} show the recovered PDFs of the modified triad model \eqref{TriadModel_New} at $t=20$ for Regime I and II, respectively. With the extra small noise $\epsilon$ in the dynamics of $u_1$, the controllability is regained and the skill of the hybrid algorithm is greatly improved.

\cref{fig:RegimeII_damping_t50} shows the statistics and the recovery of the PDFs of the following system
\begin{subequations}\label{TriadModel_New2}
\begin{align}
  \frac{du_1}{dt} &= A_1u_2u_3 - d_1u_1 + \epsilon\dot{W}_1,\label{TriadModel_u1_New2}\\
  \frac{du_2}{dt} &= A_2u_3u_1 - d_2u_2 + \sigma_2\dot{W}_2,\label{TriadModel_u2_New2}\\
  \frac{du_3}{dt} &= A_3u_1u_2 - d_3u_3 + \sigma_3\dot{W}_3,\label{TriadModel_u3_New2}
\end{align}
\end{subequations}
where an extra damping $-d_1u_1$ is imposed in addition to the small noise term in the $u_1$ dynamics. The parameter $d_1=0.1$ and other parameters are the same as those in \cref{TriadModel_New}. The model \eqref{TriadModel_New2} satisfies all the conditions in \cref{prop:SF} and therefore hybrid algorithm is always skillful. Note that the triad model \eqref{TriadModel_New2} is linearly stable with respect to all the three variables and no unbounded growth in the $u_1$ direction as the versions in \eqref{TriadModel_New} and \eqref{TriadModel_New2}.

\begin{figure}[htbp]
  \centering\label{fig:RegimeI_t20_Noise01}
  \hspace*{-2cm}\includegraphics[width=17.5cm]{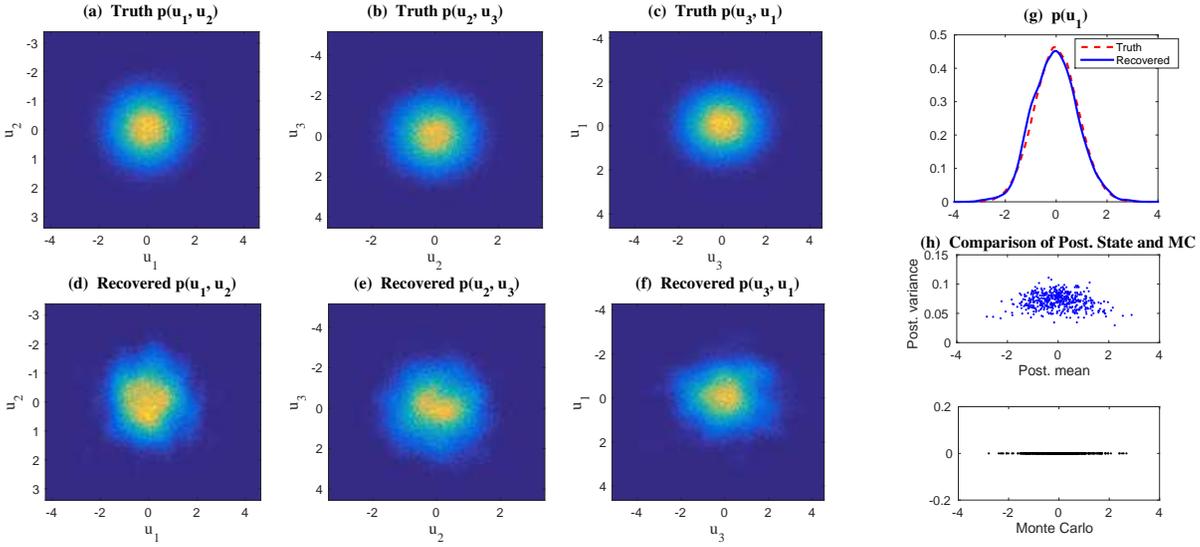}
  \caption{Modified triad model \eqref{TriadModel_New}, Regime I at $t=20$. Same captions as in \cref{fig:RegimeI_t1}.}
\end{figure}

\begin{figure}[htbp]
  \centering\label{fig:RegimeII_t20_Noise01}
  \hspace*{-2cm}\includegraphics[width=17.5cm]{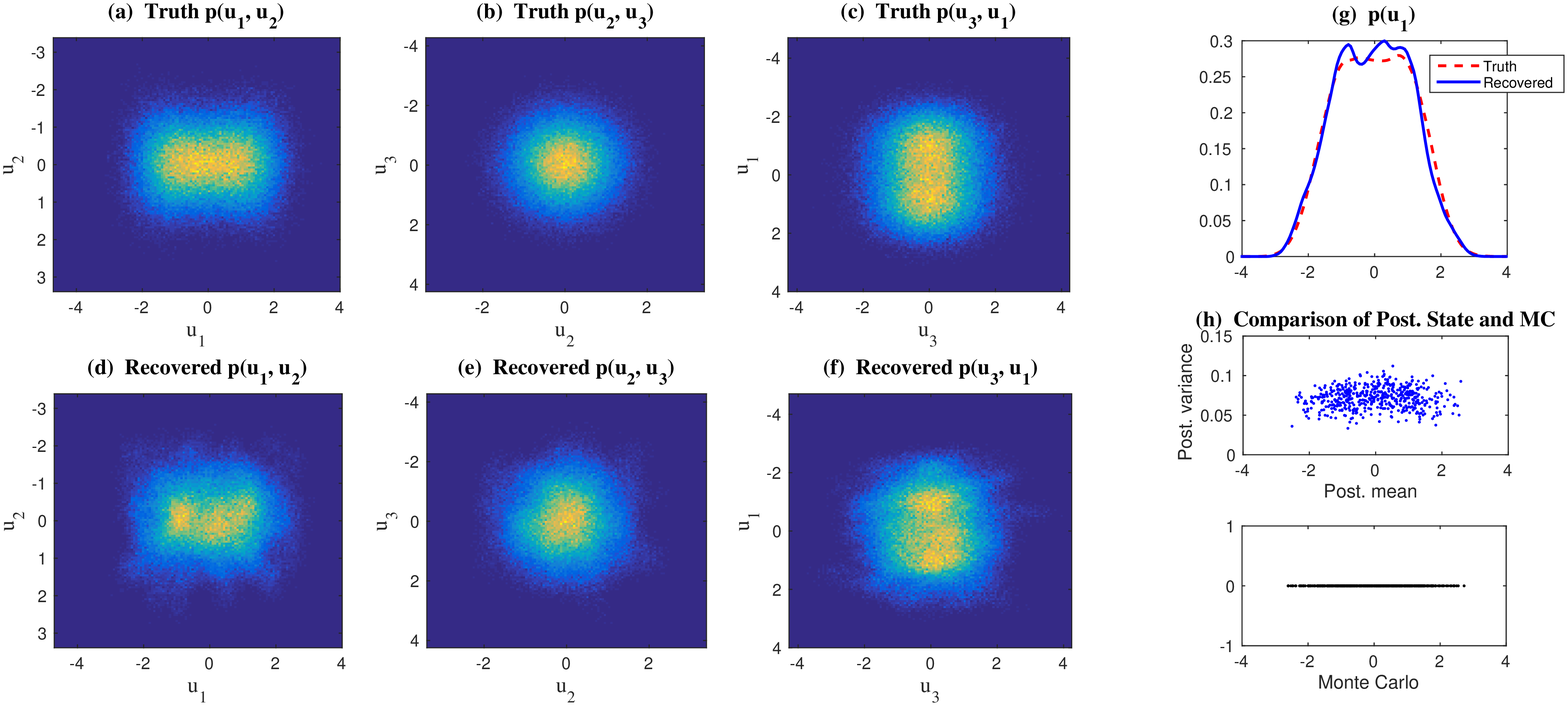}
  \caption{Modified triad model \eqref{TriadModel_New}, Regime II at $t=20$. Same captions as in \cref{fig:RegimeI_t1}.}
\end{figure}

\begin{figure}[htbp]
  \centering\label{fig:RegimeII_damping_t50}
  \hspace*{-2cm}\includegraphics[width=17.5cm]{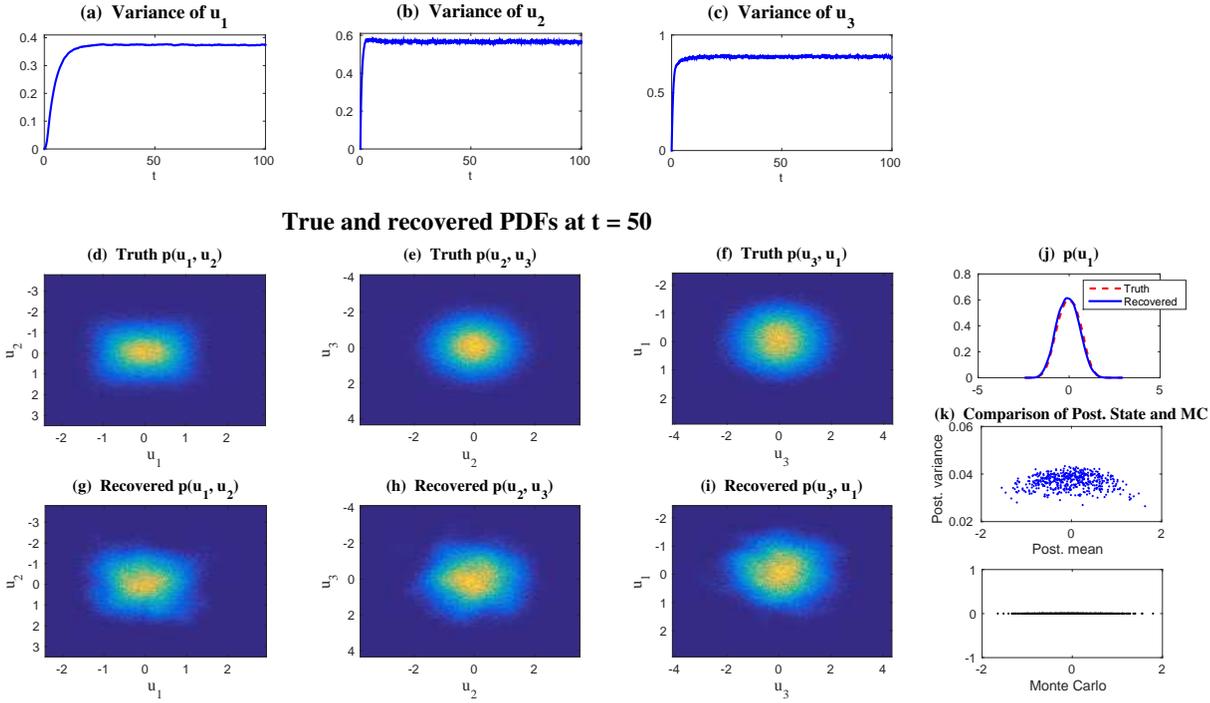}
  \caption{Modified triad model \eqref{TriadModel_New2} with extra damping $d_1u_1$, Regime II at $t=50$. Panels (a)-(c): time evolution of the variance of the three components. Panels (d)-(k): same captions as in \cref{fig:RegimeI_t1}.}
\end{figure}

\bibliographystyle{siamplain}
\bibliography{references}
\end{document}